\definecolor{vio}{rgb}{0.54, 0.17, 0.89}
\newtheorem{theorem}{Theorem}[section]
\newtheorem{lemma}[theorem]{Lemma}
\newtheorem{proposition}[theorem]{Proposition}
\newtheorem{conjecture}[theorem]{Conjecture}
\newtheorem{corollary}[theorem]{Corollary}
\numberwithin{equation}{section}
\theoremstyle{remark}
\theoremstyle{definition}
\DeclareMathOperator{\Z}{\mathbb{Z}}
\DeclareMathOperator{\mcA}{\mathcal{A}}
\DeclareMathOperator{\PP}{\mathcal{P}}
\DeclareMathOperator{\QQ}{\mathcal{Q}}
\def\reals{\hbox{\rm I\kern-.18em R}}
\def\complexes{\hbox{\rm C\kern-.43em
\vrule depth 0ex height 1.4ex width .05em\kern.41em}}
\def\field{\hbox{\rm I\kern-.18em F}} 
\let\svthefootnote\thefootnote
\newcommand\freefootnote[1]{%
  \let\thefootnote\relax%
  \footnotetext{#1}%
  \let\thefootnote\svthefootnote%
}
\newenvironment{section*}[2][A]{
  \section*{#2}
  \renewcommand\thesection{#1}
  \setcounter{theorem}{0}}{}
\begin{document}

\title[Primes and almost primes between cubes]{Primes and almost primes between cubes}

\author[Johnston, Thomas, Sorenson, and Webster]{Daniel R. Johnston, Jonathan P. Sorenson, Simon N. Thomas \\and
Jonathan E. Webster}
\address{School of Science, UNSW Canberra, Australia}
\email{daniel.johnston@unsw.edu.au}
\thanks{The first author was supported by Australian Research Council Discovery Project DP240100186 and an Australian Mathematical Society Lift-off
Fellowship. The third author was supported by an Australian Government Research Training Program (RTP) Scholarship.
Computing Infrastructure at Butler University was supported by Frank Levinson and the Fairbanks Foundation.
Computing Facilities were also provided by the National Computational Infrastructure of Australia.
}
\address{Computer Science and Software Engineering Department, Butler University}
\email{jsorenso@butler.edu}
\address{School of Mathematics and Physics, The University of Queensland}
\email{s.n.thomas@student.uq.edu.au}
\address{Mathematical Sciences Department, Butler University}
\email{jewebste@butler.edu}
\date\today

\begin{abstract}
    In this paper we study the problem of detecting prime numbers between all consecutive cubes. Firstly, we use a large computation to show that there is always a prime between $n^3$ and $(n+1)^3$ for $n^3\leq 1.649\cdot 10^{40}$. In addition, we use this computation and a sieve-theoretic argument to show that there exists a number with at most 2 prime factors (counting multiplicity) between $n^3$ and $(n+1)^3$ for all $n\geq 1$. Our sieving argument uses a logarithmic weighting procedure attributed to Richert, which yields significant numerical improvements over previous approaches. 
\end{abstract}

\maketitle

\freefootnote{\textit{Corresponding author}: Simon Thomas (s.n.thomas@student.uq.edu.au).}
\freefootnote{\textit{Affiliation}: School of Mathematics and Physics, The University of Queensland.}
\freefootnote{\textit{Key phrases}: computational number theory, sieve methods, almost primes, short intervals.}
\freefootnote{\textit{2020 Mathematics Subject Classification}: 11N05, 11N36, 11Y11, 11Y16.}

\section{Introduction}

\subsection{Overview}

A longstanding open problem in number theory is Legendre's conjecture, which asks whether there is always a prime between consecutive squares $n^2$ and $(n+1)^2$. A natural weakening of this conjecture is to ask whether one can prove the existence of a prime between consecutive cubes $n^3$ and $(n+1)^3$. For large $n$, this was answered by Ingham in 1937 \cite{ingham1937difference}.

\begin{theorem}[Ingham]\label{ingthm}
    For all sufficiently large $n$, there exists a prime between $n^3$ and $(n+1)^3$.
\end{theorem}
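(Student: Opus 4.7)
The plan is to follow Ingham's original approach, which combines the explicit formula for $\psi(x)$ with a non-trivial zero-density estimate for $\zeta(s)$. Set $x = n^3$ and $h = (n+1)^3 - n^3 = 3n^2 + 3n + 1$, so $h \sim 3x^{2/3}$. Since the number of prime powers that are not prime in $[x, x+h]$ is $O(x^{1/2})$, showing that a prime exists in this interval reduces to showing $\psi(x+h) - \psi(x) > 0$ for large $x$.

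Next, I would invoke the truncated Riemann--von Mangoldt explicit formula, which for $2 \leq T \leq x$ gives
\[
\psi(x+h) - \psi(x) = h - \sum_{|\gamma| \leq T} \frac{(x+h)^\rho - x^\rho}{\rho} + O\!\left(\frac{x \log^2 x}{T}\right),
\]
where $\rho = \beta + i\gamma$ ranges over the non-trivial zeros of $\zeta$. Using $|(x+h)^\rho - x^\rho| \leq h x^{\beta-1}|\rho|$ for small $h$, the contribution of zeros with real part $\beta$ in a dyadic range $[\sigma, \sigma + d\sigma]$ can be bounded by $h x^{\sigma-1}$ times the zero-counting function $N(\sigma,T)$. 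Applying Ingham's zero-density bound
\[
N(\sigma, T) \ll T^{\frac{3(1-\sigma)}{2-\sigma}}(\log T)^{C}
\]
and optimising over $T$ (typically $T = x^{1/3}$), one shows that for any $\theta > 5/8$ and $h \geq x^{\theta}$,
\[
\sum_{|\gamma|\leq T} \frac{(x+h)^\rho - x^\rho}{\rho} = o(h)
\]
as $x \to \infty$, uniformly in $h$ in the stated range.

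Finally, since $h \sim 3 x^{2/3}$ and $2/3 > 5/8$, the hypothesis is comfortably satisfied, so $\psi((n+1)^3) - \psi(n^3) = h\bigl(1 + o(1)\bigr) \sim 3n^2$. In particular the left-hand side is strictly positive for all sufficiently large $n$, and after subtracting the $O(n^{3/2})$ contribution from prime powers $p^k$ with $k \geq 2$, one still obtains $\vartheta((n+1)^3) - \vartheta(n^3) > 0$, guaranteeing an actual prime in the interval.

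The main obstacle is the zero-density estimate: the elementary Riemann--von Mangoldt bound $N(\sigma,T) \ll T \log T$ only delivers primes in intervals of length $x^{1-\epsilon}$, which is hopelessly weak for cubes. The whole content of the theorem lies in Ingham's sharper bound, whose proof requires Carlson-type fourth-moment estimates for $\zeta(s)$ on vertical lines inside the critical strip; the rest of the argument is a careful but essentially routine bookkeeping exercise with the explicit formula.
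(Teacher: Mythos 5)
Your proposal is correct in outline and is essentially Ingham's own argument, which is exactly what the paper relies on: Theorem \ref{ingthm} is not proved in the paper at all but simply cited from \cite{ingham1937difference}, and your reconstruction (explicit formula for $\psi$, dyadic bookkeeping over zeros, and the Carlson/Ingham fourth-moment zero-density estimate giving primes in $[x,x+x^\theta]$ for $\theta>5/8<2/3$) matches that source's method. The only point glossed over is the treatment of zeros with $\beta$ near $1$, where the quoted bound $N(\sigma,T)\ll T^{3(1-\sigma)/(2-\sigma)}(\log T)^{C}$ alone is insufficient for $\theta$ near $5/8$ and one must also input the density estimate in the form $N(\sigma,T)\ll T^{(2+4c)(1-\sigma)}(\log T)^{5}$ with $c=1/6$ together with a zero-free region, exactly as Ingham does.
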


However, despite almost 90 years passing since Ingham's result, it is still an open problem as for whether Theorem \ref{ingthm} holds for all $n\geq 1$.

\begin{conjecture}\label{cubecon}
    For all $n\geq 1$, there exists a prime between $n^3$ and $(n+1)^3$.
\end{conjecture}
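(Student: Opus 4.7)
The plan is to combine a fully explicit version of Ingham's theorem (Theorem \ref{ingthm}) with a direct computational verification for small $n$. One fixes an explicit threshold $N_0$ for which the analytic argument unconditionally guarantees a prime in every interval $(n^3,(n+1)^3)$ with $n\geq N_0$, and then verifies the conjecture by computer for every $n<N_0$.

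For the analytic step, I would work from the truncated explicit formula
\[
\psi(x+h)-\psi(x) = h - \sum_{|\gamma|\leq T}\frac{(x+h)^\rho - x^\rho}{\rho} + E(x,h,T),
\]
with $x=n^3$, $h=3n^2+3n+1$, and show that the sum over non-trivial zeros $\rho=\beta+i\gamma$ is strictly smaller than $h$. The contribution is controlled by combining a wide explicit zero-free region of Vinogradov--Korobov type with zero-density estimates of the form $N(\sigma,T)\ll T^{A(\sigma)(1-\sigma)}(\log T)^B$ near $\sigma=1$. I would feed in the sharpest currently available ingredients (the Platt--Trudgian numerical verification of RH up to height $\approx 3\cdot 10^{12}$, and the explicit zero-density estimates of Kadiri, Ng, Cully-Hugill, and collaborators) to pin every constant down and to minimise $N_0$.

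For the computational step, I would, for each $n<N_0$, apply a segmented sieve over $(n^3,(n+1)^3)$ (an interval of length $\approx 3n^2$) and stop at the first prime, confirmed by a deterministic test. Cram\'er-type heuristics suggest a prime is encountered within $O((\log n)^2)$ candidates, so a single $n$ is essentially free; the total cost is governed by the size of $N_0$. The computations reported in the present paper already dispatch all $n^3\leq 1.649\cdot 10^{40}$ along these lines.

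The hard part, and the reason Conjecture \ref{cubecon} is open, is the chasm between the analytic and computational regimes. With current zero-density technology the best unconditional exponent for primes in short intervals is of order $x^{7/12}$ (Huxley), whereas intervals around $n^3$ have length only $\sim x^{2/3}$; making every constant explicit pushes $N_0$ to iterated-exponential size, astronomically beyond any feasible sieve. Even under the Riemann Hypothesis the corresponding explicit threshold, while far smaller, remains out of reach of direct computation. My proposal therefore reduces Conjecture \ref{cubecon} to a genuine analytic advance: either a meaningful strengthening of the explicit zero-density estimates close to the line $\mathrm{Re}(s)=1$, or a new sieve- or combinatorial-type identity controlling prime gaps in intervals of length $x^{2/3}$. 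Absent such an input, the hybrid plan cannot be completed and the conjecture must be regarded as conditional on future progress in one of these directions.
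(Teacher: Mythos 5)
The statement you were asked about is Conjecture \ref{cubecon}, and the paper does not prove it --- nor do you, as you candidly acknowledge at the end of your proposal. So there is no complete argument here to check line by line: what you have written is a (reasonable) reduction of the conjecture to an explicit-threshold problem, together with a correct diagnosis of why that reduction cannot currently be closed. Your framing matches the paper's own discussion: Ingham's theorem gives the result for large $n$, explicit versions of it (Dudek's $n\geq\exp(\exp(33.3))$, improved to $\exp(\exp(32.537))$ by Cully--Hugill) produce a threshold astronomically beyond computation, and the computational side has been pushed to $n^3\leq 1.649\cdot 10^{40}$ (Theorem \ref{compthm}, about 415{,}000 core hours). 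The paper's actual response to this impasse is different from ``wait for analytic progress'': it retreats to the weaker Theorem \ref{almostthm}, producing an integer with at most two prime factors in every interval $(n^3,(n+1)^3)$ by combining the computation with an explicit linear sieve and Richert's logarithmic weights, and it remarks that even in principle this sieve route cannot reach genuine primes without an explicit Harman-type or well-factorable-remainder linear sieve.

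One technical point in your write-up is off. You say the best unconditional short-interval exponent is $x^{7/12}$ (Huxley) ``whereas intervals around $n^3$ have length only $\sim x^{2/3}$,'' presenting this as the obstruction. Since $x^{2/3}>x^{7/12}$, the interval between consecutive cubes is \emph{longer} than Huxley's threshold, and indeed Ingham's older $x^{5/8}$ result already settles the asymptotic statement (Theorem \ref{ingthm}); the obstruction is not the exponent but purely the size of the explicit constants and the resulting doubly exponential threshold $N_0$, exactly as you state in the next clause. With that correction, your assessment is accurate: absent either a dramatic improvement in explicit zero-density/zero-free-region inputs or an explicit prime-detecting sieve of Harman or Iwaniec type, the hybrid plan cannot be completed, and Conjecture \ref{cubecon} remains open --- which is precisely the status the paper assigns to it.
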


Since Ingham's proof relies heavily on properties of the Riemann zeta function $\zeta(s)$, one can readily show that Conjecture \ref{cubecon} holds under assumption of the Riemann hypothesis. This was shown explicitly in 2005 by Caldwell and Cheng \cite[Lemma~5]{caldwell2005determining}, who also give an application to a prime representing function of Mills \cite{mills1947prime}.

As a first step towards resolving Conjecture \ref{cubecon} unconditionally, we consider how far one can go with modern computations. Here, state-of-the-art interval estimates \cite[Table 2]{cully2022explicit} imply that there is always a prime between $n^3$ and $(n+1)^3$ for $n^3\leq 1.42\cdot 10^{35}$. However, by adapting a recent algorithm of the second and fourth authors \cite{sorenson2025}, we are able to improve this result by a factor of $10^5$.
\begin{theorem}\label{compthm}
    There exists a prime between $n^3$ and $(n+1)^3$ for all ${n^3\leq 1.649\cdot 10^{40}}$.
\end{theorem}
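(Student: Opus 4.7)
The plan is to extend the computational verification beyond the existing bound of $1.42\cdot 10^{35}$ implied by \cite[Table 2]{cully2022explicit}. For $n$ small enough that the Cully-Hugill explicit interval result applies (or that published tables of maximal prime gaps dwarf $3n^2+3n+1$), the existence of a prime in $[n^3,(n+1)^3]$ is already known and requires no new work; the real task is to verify the claim for the remaining $n$ with $n^3$ between roughly $1.42\cdot 10^{35}$ and $1.649\cdot 10^{40}$, which corresponds to roughly $n\leq 2.5\cdot 10^{13}$.

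The core of the proof would be an adaptation of the algorithm of Sorenson and Webster \cite{sorenson2025} to the specific task of exhibiting at least one prime in each interval $[n^3,(n+1)^3]$. The key efficiency gain is that one does not need to enumerate every prime in an interval, only to produce a single witness. For each $n$, I would segment-sieve an initial block of $[n^3,(n+1)^3]$ by all primes up to a carefully chosen bound $B$, subject the surviving candidates to a strong Miller--Rabin test to several bases, and certify any pseudoprime found by a rigorous deterministic primality test such as BPSW or ECPP. The block length and $B$ are tuned so that, under the heuristic prime density near $n^3$ of order $1/(3\log n)$, a prime is typically found after inspecting only $O(\log n)$ candidates. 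In the rare case that the initial block yields no prime, the block is widened and the sieve resumed, so that the algorithm never fails unless an actual gap of size exceeding $3n^2+3n+1$ occurs.

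Because the intervals for distinct $n$ are independent, the computation parallelises trivially, and I would distribute contiguous ranges of $n$ across the Butler University and Australian NCI computing resources acknowledged in the paper. A master-worker scheduling scheme would assign ranges, collect prime witnesses, and detect any range that failed to report a witness for further investigation.

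The main obstacle is engineering rather than mathematics: at the top of the range $n\approx 2.5\cdot 10^{13}$, arithmetic is multi-precision (numbers near $10^{40}$ need roughly 135-bit storage), and although the per-interval sieving block is much shorter than the full interval of length $\approx 2\cdot 10^{27}$, the aggregate sieving cost still dominates. I would therefore invest in a highly tuned segmented wheel sieve, a fast modular exponentiation routine for Miller--Rabin, and cache-friendly bitmap layouts. A secondary concern is correctness: I would run an independent re-implementation on random subranges, cross-check the low end against published prime tables, and re-verify each recorded prime witness with an independent primality prover before asserting the final bound $1.649\cdot 10^{40}$.
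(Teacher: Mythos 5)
Your overall plan---a massively parallel computation that exhibits one prime witness per interval, verified only for the range not already covered by explicit prime-gap results---is the same in spirit as the paper, but the algorithmic core is genuinely different, and one ingredient is not rigorous. The paper does not sieve a block of consecutive integers in each interval and then certify survivors by a general-purpose prover. Instead, following \cite{sorenson2025}, it fixes a prime $R$ slightly exceeding $n^{\alpha/3}$ (so $R>n$ when $\alpha=3$) and a product $m$ of small primes (the first ten primes here), and considers only candidates $\equiv 1 \pmod{mR}$, with $mR\approx 3n^2/\log(n^3)$ so that each interval contains about $\log(n^3)$ candidates. This buys two things at once: the Brillhart--Lehmer--Selfridge test then certifies primality of a candidate with essentially a single modular exponentiation (because $R\mid p-1$ and $R$ exceeds the cube root of the candidate), and the sieving is done once over the progression $1\pmod{mR}$ for a batch of $t$ consecutive intervals, so the per-prime offset computations are amortised and fewer than $3$ primality tests are needed per interval on average. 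Your scheme pays both costs that this design is built to avoid: computing sieve offsets afresh for each interval (the blocks for consecutive $n$ are $\approx 3n^2\sim 10^{27}$ apart, so nothing is reused), and certifying roughly $2.5\cdot 10^{13}$ witnesses with ECPP/APR-CL, which is orders of magnitude slower per number than one BLS exponentiation. This is why the paper's computation fits in roughly $415$k core hours; your version would be mathematically sound but substantially more expensive, plausibly by an order of magnitude or more.

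The one outright gap is your description of BPSW as a ``rigorous deterministic primality test.'' It is not: BPSW has no known counterexamples but no proof of correctness, and the known deterministic Miller--Rabin base sets only cover numbers far below $10^{40}$, so a theorem cannot rest on it. If every recorded witness is certified by ECPP (or APR-CL, or a BLS-type test after partially factoring $p-1$), your argument becomes a valid, if costlier, route to the same statement; with BPSW alone it does not prove the theorem.
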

In particular, the algorithm in \cite{sorenson2025} was used to find primes between consecutive squares. So, in this paper we describe how one may modify this algorithm to detect primes in more general intervals $(n^{\alpha},(n+1)^{\alpha})$, with a particular focus on the case $\alpha=3$. In total, the computation for Theorem \ref{compthm} used approximately 415,000 core hours on high-performance computing platforms, thereby giving an indication for the limits of using modern computation for this problem.

Complementary to these computations, one can then try to make Ingham's argument fully explicit to give a lower bound for the values of $n$ for which there are primes between $n^3$ and $(n+1)^3$. Unfortunately, even using modern estimates on the zeros of $\zeta(s)$, this approach appears futile. In particular, in 2016 Dudek \cite{dudek2016explicit} obtained a very large lower bound of $n\geq\exp(\exp(33.3))$, which has only been improved mildly since to $\exp(\exp(32.537))$ by Cully--Hugill in her PhD thesis \cite{cully2023explicit}.

However, modern approaches for detecting primes in intervals (see e.g.\ \cite{iwaniec1979primes,baker1996difference,baker2001difference}) often make heavy use of sieve methods, rather than the purely zeta-function estimates used by Ingham. Whilst these sieve methods are often difficult to make numerically explicit, Dudek and the first author recently had some success with using a simple sifting argument in \cite{dudek2026almost}, obtaining a weaker \emph{almost prime}\footnote{Here and throughout, by \emph{almost prime} we mean a positive integer with a small number of prime factors, counting multiplicity.} result for Legendre's conjecture. Namely, they were able to show \cite[Theorem~1.2]{dudek2026almost} that there always exists a number with at most $4$ prime factors between consecutive squares. They also state that their methods can readily be adapted to detect numbers with at most $3$ prime factors between consecutive cubes. 

In this paper, we improve the sifting argument in \cite{dudek2026almost} and utilise the computation in Theorem \ref{compthm} to obtain the following sharper result. Here and throughout, $\Omega(a)$ denotes the number of prime factors of $a$, counting multiplicity.

\begin{theorem}\label{almostthm}
    For all $n\geq 1$, there exists $a\in(n^3,(n+1)^3)$ with $\Omega(a)\leq 2$.
\end{theorem}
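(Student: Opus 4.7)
The plan is to split on the size of $n$. For $n^3 \leq 1.649\cdot 10^{40}$ there is nothing further to do: Theorem \ref{compthm} already produces a prime in $(n^3,(n+1)^3)$, and a prime trivially satisfies $\Omega=1\leq 2$. The content of the theorem is therefore the range $n^3>1.649\cdot 10^{40}$, which I would attack by a fully explicit Richert-weighted linear sieve.

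For that range, set $\mathcal{A}=\{a\in\Z : n^3<a\leq(n+1)^3\}$, so $|\mathcal{A}|=3n^2+3n+1$. This is a one-dimensional sieve with trivial local densities $g(p)=1/p$ and remainders $|r_d|\leq 1$, so the Jurkat--Richert linear sieve (as sharpened by Iwaniec) applies with sifting level $D=|\mathcal{A}|^\theta$ and $\theta$ essentially as close to $1$ as one wishes. To extract a $P_2$ from the sieve I would invoke Richert's logarithmic weighting: pick parameters $0<u<v$, put $z=(n+1)^{3/v}$ and $y=(n+1)^{3/u}$, and form
\[
\Xi \;=\; \sum_{\substack{a\in\mathcal{A}\\ (a,P(z))=1}}\left(1-\sum_{\substack{p\mid a\\ z\leq p<y}}\left(1-\frac{\log p}{\log y}\right)\right),
\]
where $P(z)=\prod_{p<z}p$. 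Any $a$ contributing positively to $\Xi$ is coprime to $P(z)$ and satisfies a sharp upper bound on $\sum_{p\mid a}(1-\log p/\log y)$; provided $u$ is tuned so that $\log((n+1)^3)/\log y$ is strictly less than $2$, the short combinatorial check attached to Richert's weighting (combined with an elementary side bound disposing of possible square factors $p^2\mid a$ with $p\geq z$) forces $\Omega(a)\leq 2$.

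Positivity of $\Xi$ is established by writing
\[
\Xi \;=\; S(\mathcal{A},P(z))-\sum_{z\leq p<y}\left(1-\frac{\log p}{\log y}\right)S(\mathcal{A}_p,P(z)),
\]
bounding the first term below via the linear-sieve lower function $f$ and each term in the second sum above via the corresponding upper function $F$. After integration this reduces positivity of $\Xi$ to an explicit inequality in $(u,v,\theta)$ of the shape $f(\cdot)-\int_u^v F(\cdot)(1-t/v)\,dt/t>0$, together with control of the accumulated sieve remainder of size $O(D(\log D)^A)$. The main obstacle, and where most of the work sits, is precisely this explicit calibration: one must track every constant inside the linear sieve, optimize $(u,v,\theta)$ subject to delivering $\Omega\leq 2$, and verify that the resulting inequality still has slack for every $n$ above the computational cutoff $n^3\approx 1.649\cdot 10^{40}$. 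The improvement from $\Omega\leq 3$ in \cite{dudek2026almost} down to $\Omega\leq 2$ here is enabled both by the sharper Richert weighting (replacing the cruder sifting used there) and by the substantially larger computational range of Theorem \ref{compthm}, which moves the transition point between the two regimes five orders of magnitude further out.
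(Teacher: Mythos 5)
Your plan coincides with the paper's: Theorem~\ref{compthm} disposes of $n^3\le 1.649\cdot 10^{40}$, and for larger $n$ one applies Richert's logarithmic weights with an explicit linear sieve, writing the weighted sum as $S(\mcA,\PP,z)$ minus $\sum_{z\le p<y}\bigl(1-\tfrac{\log p}{\log y}\bigr)S(\mcA_p,\PP,z)$, bounding the first term below via $f$, each term of the sum above via $F$, removing elements with square factors $p^2\mid a$ for $p\ge z$, and using $|r_d|\le 1$ for the remainders. This is precisely the structure of Lemmas~\ref{rklemma1}--\ref{rklemma2}, Lemma~\ref{explicitq0}, and Proposition~\ref{newprop35-2}.

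Two concrete shortfalls, though. First, your weight has leading constant $1$, so you must exhibit an element of individually positive weight and your positivity condition is $S>\sum$; the paper instead takes $\lambda=k+1-k_2$ (here $1.15$) and uses the counting inequality $r_k\ge \tfrac1k W$ of Lemma~\ref{rklemma1}, so it only needs $\lambda S>\sum$. This is not cosmetic: with the paper's final constants ($\tfrac{1.15}{2}S>3.77\,N^{2/3}/\log X$ against $\tfrac12\sum\le 3.236\,N^{2/3}/\log X$) your normalisation would still close, but with roughly a tenth of the slack, and you could not know this without the numerics. Second, and more seriously, the proposal stops exactly where the theorem's content begins: no parameters are fixed and the positivity inequality is never verified. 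This cannot be deferred as routine calibration --- the paper records that the same plan with Kuhn's weights fails even for $n\ge 10^{60}$, and the successful choice ($k_1=5$, $k_2=1.85$, $s\approx 2.98$, level exponent $\tfrac23-0.03$, together with the explicit Mertens-type inputs of Lemmas~\ref{PrimeProductBound} and~\ref{vanlalngaia2017explicit}) wins by a modest margin only. Your sketch also glosses over a genuine technical point handled in Proposition~\ref{newprop35-2}: for $p$ near $y$ the admissible level $D_p\approx X^{2/3-\alpha}/p$ drops below $z$, so the upper-bound sieve cannot be applied with $s_p<1$; the paper circumvents this via $S(\mcA_p,\PP,z)\le S(\mcA_p,\PP,D_p)$ applied at $s=1$. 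So: right approach, but the decisive explicit estimates are missing, and your weaker weighting would leave very little room to carry them out.
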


To prove Theorem \ref{almostthm}, we utilise a logarithmic sieve weighting procedure attributed to Richert \cite{richert1969selberg}. By contrast, in \cite{dudek2026almost} a simpler weighting procedure due to Kuhn \cite{kuhn1954neue} was used. We found, more than we expected, that Richert's logarithmic weights give much better numerical results than Kuhn's weighting procedure. For instance, we performed some rough calculations using Kuhn's weighting procedure and were not able to prove Theorem \ref{almostthm}, even for $n\geq 10^{60}$. We also note that other more complicated weighting procedures exist that improve upon Richert's weights asymptotically (see e.g.\ \cite[\S 5.2--5.3]{greaves2013sieves}). However, given the complexity of these weighting procedures, it is not clear whether they would yield better numerical results than Richert's weights, except for when dealing with very large numbers.

As an application of Theorem~1.4, one readily obtains the following almost prime variant of Mill's prime representing function\footnote{Technically, we require the slightly stronger condition $a\in(n^3,(n+1)^3-1)$ with $\Omega(a)\leq 2$. However, this still follows from our proof of Theorem~\ref{almostthm}.}.
\begin{corollary}\label{Millscor}
    Let $A=1.30637788\ldots$ be the value of Mill's constant as defined in \cite{caldwell2005determining}. Then for all $n\geq 1$, the function
    \begin{equation*}
        f(n)=\lfloor A^{3^n}\rfloor
    \end{equation*}
    has at most two prime factors counting multiplicity.
\end{corollary}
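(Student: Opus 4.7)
The plan is to follow Mills' classical construction of a prime-representing constant, substituting almost primes (supplied by Theorem~\ref{almostthm}) for primes at each step. The footnote's strengthening of Theorem~\ref{almostthm}, placing the almost prime in $(n^3, (n+1)^3 - 1)$ rather than $(n^3, (n+1)^3)$, is essential, since it ensures the strict inequality needed to pin down the value of the floor function.

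First, I would recursively build a sequence $\{a_n\}_{n \geq 1}$ by setting $a_1 = 2$ and, given $a_n$, choosing $a_{n+1} \in (a_n^3, (a_n+1)^3 - 1)$ with $\Omega(a_{n+1}) \leq 2$; such $a_{n+1}$ exists by the strengthened form of Theorem~\ref{almostthm}. The standard Mills-type argument then yields monotonicity: $u_n := a_n^{1/3^n}$ is strictly increasing (since $a_{n+1} > a_n^3$) and $v_n := (a_n+1)^{1/3^n}$ is strictly decreasing (since $a_{n+1} + 1 < (a_n+1)^3$), with $u_n < v_n$ for all $n$. Both sequences therefore converge to a common limit $A$, and the strict inequalities $u_n < A < v_n$ force $a_n < A^{3^n} < a_n + 1$, so $f(n) = \lfloor A^{3^n} \rfloor = a_n$, which has $\Omega(a_n) \leq 2$.

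To recover the specific value $A = 1.30637788\ldots$ of \cite{caldwell2005determining}, I would match the selection of $a_{n+1}$ at each step to their procedure: for small indices, picking $a_{n+1}$ equal to the relevant Mills prime (whose existence is assured by Theorem~\ref{compthm}), and for larger indices letting the almost-prime guarantee of Theorem~\ref{almostthm} take over. The main obstacle will be the clean identification of the constructed constant with the specific value $1.30637788\ldots$. Because primes form a strict subset of the $\Omega \leq 2$ integers, the almost-prime analogue of Mills' construction admits genuine extra flexibility at each step, and different valid selections yield different Mills-type constants; reproducing Caldwell--Cheng's value requires our choices of $a_{n+1}$ to agree with theirs over the initial range of indices, which is exactly the range Theorem~\ref{compthm} is designed to verify unconditionally.
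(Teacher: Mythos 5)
Your proposal is correct and is essentially the argument the paper has in mind: the paper omits the proof precisely because it is Mills' original recursion-and-monotonicity argument (using the strengthened interval $(n^3,(n+1)^3-1)$ from the footnote, with Theorem~\ref{almostthm} supplying almost primes beyond the computed range) combined with the Caldwell--Cheng computations that fix the initial Mills primes and hence the displayed digits of $A$. Your reconstruction, including the observation that the leading digits are pinned down by the unconditionally verified initial terms while later terms only need $\Omega\leq 2$, matches that intended proof.
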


It is conjectured that $f(n)$ is prime for all $n\geq 1$ and this would essentially follow from Conjecture \ref{cubecon}. We refer the reader to \cite{elsholtz2020unconditional} for further discussion of \emph{unconditional} variants of Mill's constant. Note that we do not give the proof of Corollary \ref{Millscor} here, since the argument is identical to that in Mill's original paper \cite{mills1947prime}, combined with the computations in \cite{caldwell2005determining}.

Finally, we remark that using our approach it is not possible to improve Theorem \ref{almostthm} to $\Omega(a)=1$, even for sufficiently large $n$. Rather, to detect primes between consecutive cubes using known sieve methods, one would require a more powerful tool. Namely, one would need either an explicit version of Harman's prime-detecting sieve (see \cite[Chapter~7]{harman2007prime}) or an explicit version of Iwaniec's linear sieve with a well-factorable remainder term (see \cite[Chapter 6]{greaves2013sieves}), neither of which have been established in the literature. 

\subsection{Outline of paper}
An outline of the rest of the paper is as follows. In Section \ref{compsect} we give the computational details for Theorem \ref{compthm}. Then, in Section \ref{prelimsect}, we provide all the preliminary results required to prove Theorem \ref{almostthm}. Notably, this includes an explicit form of Richert's logarithmic sieve weighting procedure (see Section \ref{Richertsect}). Finally, in Section \ref{mainsect} we complete the proof of Theorem \ref{almostthm}.

\section{Computational details for Theorem \ref{compthm}}\label{compsect}
We begin by describing the computational details for Theorem~\ref{compthm}. Note that this section is self-contained and that the subsequent notation, chosen as to coincide with that in~\cite{sorenson2025}, should not be confused with the notation in Sections~\ref{prelimsect} and~\ref{mainsect}. 

In \cite{sorenson2025}, an algorithm was given that could find a prime in the interval $(n^2, (n+1)^2)$ for every $n < N$, thereby verifying Legendre's conjecture up to $N$.  
There were three primary components of the algorithm.
In this section, we give an overview of these components, provide a key theorem, and explain how to generalise the algorithm to higher powers with the aim of finding primes between consecutive cubes.

The first component was the Brillhart-Lehmer-Selfridge (BLS) primality test, which requires that for a candidate prime $p$, a large prime factor $R$ divides ${p-1}$~\cite{brillhart1975}.  
In this way, a proof of primality could be produced with a single modular exponentiation in most cases.  
For a candidate prime in the interval $(n^2, (n+1)^2)$, $R$ was chosen to slightly exceed $n^{2/3}$.
Since the interval is of length $2n - 1$, a given interval has around $2n^{1/3}$ numbers that are $1 \pmod{R}$ to serve as potential candidate primes.

Second, one assumes Cram\'er's model \cite{cramer1936order}, which treats the primes like a sequence of independent random variables, with any given integer having a probability of $1/\log n$ of being prime. In particular, by Cram\'er's model we only want to check about $\log(n^2)$ candidates to find a prime. One then lets $m$ be a product of small primes so that $mR \approx 2n/\log(n^2)$.
Now, under Cram\'er's model, it is highly likely that at least one candidate that is $1 \pmod{ mR }$ in the interval will be a prime.
By having many small primes in $m$, this gives an even greater chance that the candidates that are $1 \pmod {mR}$ will be prime.

Third, rather than process one interval at a time, one batch-processes a set of $t$ intervals together using the interval $(n^2, (n+t)^2)$.  
In this way, we can sieve in the arithmetic progression $1 \pmod{mR}$ to identify many candidate primes for all $t$ sub-intervals at once.  
As a result, we only attempt primality tests on candidates that are either prime or for whom all their prime factors exceed the sieve bound.
This greatly reduces the number of primality tests invoked on a given interval.  
In practice, less than $3$ primality tests were required on average per sub-interval.  

The analysis and further description of the parameter selection of $t$ may be found in \cite{sorenson2025} and we summarise by citing the key theorem.  

\begin{theorem}[{\cite[Theorem 5.1]{sorenson2025}}]
Assuming Cram\'er's model and Legendre's conjecture, there is an algorithm that can verify that Legendre's conjecture is true for all $n \leq N$ in time $O(N \log N \log \log N)$ using $O(N^{1/(\log \log N)})$ space. 
\end{theorem}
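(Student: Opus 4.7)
The plan is to bound the cost of the algorithm component by component: the one-time construction of the auxiliary prime $R$ and smooth modulus $m$, the segmented sieve that identifies candidates $\equiv 1 \pmod{mR}$ in each batch, and the BLS primality certificates produced for each such candidate. The assumption of Cram\'er's model is used to control how many candidates a typical sub-interval produces, while Legendre's conjecture guarantees that every sub-interval contains a prime so that the inner loop terminates successfully on each $n \le N$.

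First, I would partition $[1,N]$ into dyadic windows and, once per window, construct a prime $R \approx n^{2/3}$ (via a short probabilistic search together with a recursive BLS proof using its own large factor) and a smooth modulus $m$ equal to the product of the smallest primes, sized so that $mR \approx 2n/\log(n^2)$. The batched interval $(n^2,(n+t)^2)$ has length $\Theta(nt)$ and contains $\Theta(t)$ candidates in the progression $1 \pmod{mR}$. A segmented Eratosthenes-style sieve on this progression runs in time $O(nt\log\log(nt))$; this dominates the cost of the $O(t)$ expected primality tests per batch, each of which is a single modular exponentiation of cost $O((\log n)^2\log\log n)$ under fast arithmetic. Summing over $\lceil N/t\rceil$ such batches and all dyadic windows yields total time $O(N\log N\log\log N)$.

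For the space bound, I would choose $t$ so that the live sieve segment together with the residue tables for the sieving primes and the auxiliary data for $m$ and $R$ fit in $O(N^{1/\log\log N})$ machine words. Since the sieve bound and $mR$ both grow far more slowly than $N^{1/\log\log N}$, the natural choice $t \approx N^{1/\log\log N}$ delivers the claim, and the per-window overhead has the same order.

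The main obstacle is an honest treatment of the Cram\'er-model heuristic: one has to argue that the expected number of candidates per sub-interval is $O(1)$ uniformly in $n$, so that the total number of primality tests summed over $n \le N$ is genuinely $O(N)$ rather than being inflated by rare anomalous intervals. A secondary subtlety is keeping the per-window construction of $R$ cheap; a recursive BLS step on a short random search, calibrated to the density of primes of the required form inside the window, absorbs this cost into the main sieve term.
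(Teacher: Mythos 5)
This theorem is not proved in the paper at all: it is quoted from the earlier work of the third and fourth authors, and Section~\ref{compsect} only sketches the three algorithmic components, so the only meaningful comparison is with that sketch. Measured against it, your proposal has a genuine quantitative gap in the time analysis. Since $mR \approx 2n/\log(n^2)$, the batched interval $(n^2,(n+t)^2)$, of length about $2nt$, contains roughly $t\log(n^2)$ elements of the progression $1 \pmod{mR}$ --- about $2\log n$ candidates per sub-interval (the paper explicitly takes $s=\alpha\log(n+t)$ candidates per interval), not $\Theta(t)$ per batch as you assert. Cram\'er's model is invoked exactly here: each of the $\approx\log(n^2)$ candidates in a sub-interval is prime with probability about $1/\log(n^2)$, so with high probability at least one is prime, and the sieve to the bound $B$ is what reduces the number of BLS tests actually performed to $O(1)$ per sub-interval. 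Your version conflates these two counts.

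More seriously, your per-batch sieve cost $O(nt\log\log(nt))$ is the cost of sieving the \emph{entire} interval rather than the progression, and with that figure your own summation fails: over the $\lceil N/t\rceil$ batches one gets
\begin{equation*}
\sum_{j\le N/t} (jt)\,t\log\log N \;\asymp\; N^2\log\log N,
\end{equation*}
which is quadratic in $N$, not the claimed $O(N\log N\log\log N)$. The whole point of restricting to the arithmetic progression $1\pmod{mR}$ is that each batch has only $O(t\log n)$ sieve locations, so the segmented sieve costs on the order of $t\log n\log\log B$ per batch, and only then does the total come out to $O(N\log N\log\log N)$, with the space bound governed by storing the sieve primes up to $B$ and the segment of size tied to $t$. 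As written, your bookkeeping does not establish the stated time bound, and the role of Cram\'er's model (uniform control of expected survivors of the sieve, not merely of raw candidates) is the other point that needs an honest argument, as you partly acknowledge.
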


One might ask ``What role does the exponent of $2$ (the consecutive squares of Legendre's conjecture) play in the algorithm?''  
The key obstacle is the size of $R$ relative to the length of the interval.  
Consider the interval $(n^\alpha, (n+1)^\alpha)$.  
The length of the interval is approximately $\alpha n^{\alpha -1}$.  
We need $R>n^{\alpha/3}$ to use the BLS prime test.
This leaves about $\alpha n^{\alpha-1}/R < \alpha n^{(2/3)\alpha-1}$ candidates per interval, so we need $\alpha>3/2$.
We leave open what might be done if $\alpha \le 3/2$ but offer two comments. 
Since the algorithm was designed around a specific primality test, 
changing to a more costly test would require redoing the asymptotic analysis and 
the result would likely be asymptotically slower.
To keep the same primality test would require some really creative use of multiple $R$ values to guarantee enough candidates.
So, we now only consider $\alpha > 3/2$.
Fortunately for our intended application,  $3 > 2 > 3/2$, and 
we discuss the impact of choosing larger values of $\alpha$ on the three components of the algorithm.

The first step is done the same:  we choose a large candidate prime $R$ that is close to and exceeds $n^{\alpha/3}$ and 
use this prime with the BLS test.  
The second step chooses $m R \approx \alpha n^{\alpha - 1}/ \log( n^\alpha )  = n^{\alpha - 1}/ \log{n}$.
So $m \approx n^{(2\alpha - 3)/3}/\log n$.  
Note that we are making use of the strict inequality $\alpha>3/2$ to squeeze
in logarithmic factors for $m$ here.
Increasing $\alpha$ allows us to include more small primes in $m$.
In the third step, we choose $s=\alpha\log(n+t)$ (the number of candidates per interval), and thus $B$ (the sieve bound) and $t$ are basically the same as before, up to a constant.
In practice, $t$ is chosen to be as large as possible subject to everything fitting in cache.
This gives us the following:

\begin{theorem}
Let $\alpha>3/2$.
Assume that
for all integers $n>n_0(\alpha)$, there exists a prime $p$ with
$n^\alpha < p < (n+1)^\alpha$.
Assuming Cram\'er's model, there is an algorithm that can verify 
this conjecture is true
for all $n \leq N$ in time $O(N \log N \log \log N)$ using $O(N^{1/(\log \log N)})$ space. 
\end{theorem}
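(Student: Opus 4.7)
The plan is to follow the blueprint of Theorem~2.1 of \cite{sorenson2025}, replacing the exponent $2$ throughout by a general $\alpha>3/2$ and tracking how each parameter scales. The three ingredients---the BLS-based primality test anchored by a large prime factor $R$ of $p-1$, the sieve on the arithmetic progression $1\pmod{mR}$, and the batch processing of $t$ consecutive sub-intervals---carry over with the natural substitutions already sketched in the excerpt: take a prime $R$ just exceeding $n^{\alpha/3}$; choose $m$ as a product of small primes with $mR\approx n^{\alpha-1}/\log n$, so that $m\approx n^{(2\alpha-3)/3}/\log n$; and fix the target number of candidates per sub-interval to be $s=\alpha\log(n+t)$.

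First I would verify that each sub-interval of length $\approx\alpha n^{\alpha-1}$ contains roughly $\alpha n^{(2\alpha/3)-1}$ integers congruent to $1\pmod R$, so that after the additional sieving by $m$ one retains $\Theta(\log n)$ candidates. This is precisely where the strict inequality $\alpha>3/2$ is used, both to keep the exponent $(2\alpha-3)/3$ positive and to absorb the logarithmic factors in $m$. Under Cram\'er's model, the expected number of primes in this pool is $\Theta(1)$, so with high probability the first few candidates yield a prime; BLS then supplies the primality certificate in $O(1)$ modular exponentiations per candidate because $R>n^{\alpha/3}$.

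The complexity analysis from \cite{sorenson2025} then transfers essentially verbatim: batch-processing $t$ sub-intervals at a time, with $t$ chosen as large as cache permits, amortises the sieve and primality work to $O(\log N\log\log N)$ per value of $n$, yielding total time $O(N\log N\log\log N)$ and space $O(N^{1/\log\log N})$ (the space is just that required to hold the sieve table for one batch). The hypothesis that a prime lies in $(n^\alpha,(n+1)^\alpha)$ for all $n>n_0(\alpha)$ is invoked, exactly as in \cite{sorenson2025}, to justify terminating each per-interval search after $O(\log n)$ candidates; the finitely many values $n\leq n_0(\alpha)$ contribute only an additive constant.

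The main obstacle, and essentially the only non-routine step, is the regime $\alpha\to (3/2)^+$: here the factor $n^{(2\alpha-3)/3}$ in $m$ becomes small and the parameter balance grows delicate. One must check that the $1/\log n$ slack still allows $m$ to absorb sufficiently many small primes for the Cram\'er-model heuristic on the density of primes in the progression $1\pmod{mR}$ to remain quantitatively valid uniformly in $\alpha$. Once this boundary bookkeeping is settled, the remainder of the argument is a direct translation of \cite{sorenson2025} with $2$ replaced by $\alpha$.
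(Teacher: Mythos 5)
Your proposal matches the paper's own treatment: the paper likewise proves this theorem by transplanting the algorithm of \cite{sorenson2025} with $R$ just above $n^{\alpha/3}$, $mR\approx n^{\alpha-1}/\log n$ (so $m\approx n^{(2\alpha-3)/3}/\log n$), $s=\alpha\log(n+t)$ candidates per sub-interval, and the same batched sieve, with $\alpha>3/2$ used exactly as you say to guarantee enough candidates and absorb the logarithmic factors. No essential differences; your argument is the intended one.
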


We implemented the above with $\alpha = 3$.  
When $\alpha = 2$ in \cite{sorenson2025}, we had $m = 30$.  
With $\alpha = 3$, we have $m = \prod_{i = 1}^{10} p_i$, the product of the first ten primes.   
Since more small primes were included in $m$, we did not use the bit-mask sieving (see Section $6$ bullet point $3$ of \cite{sorenson2025}).  
Having made these parameter and algorithmic changes, we verified that there is a prime between consecutive cubes for all intervals $(n^3, (n+1)^3)$ with $n\leq 2.54569\cdot 10^{13}$ and thus $n^3\leq 1.649\cdot 10^{40}$. That is, we obtained Theorem~\ref{compthm}.

The computation was completed on a number of different high-performance computing platforms.
At Butler University, we again used the four Intel Xeon Phi 7210 processors (256 cores) and the 192-core cluster of Intel Xeon E5-2630 2.3 GHz processors, as was done for the previous computation from \cite{sorenson2025}.
In addition, we used a new cluster of 4 nodes/448 cores of
AMD EPYC 9734 2.2GHz processors at Butler,
and NCI's Gadi supercomputer in Australia.
We used about 62k CPU hours on Gadi, one month of wall time each on the 256 Phi cores and on the 192-core cluster, and 3 days of wall time on the new 448 AMD cores.
This totals roughly 415k core hours of work.

Our code and data can be found here:
\url{https://github.com/sorenson64/cubes}

\section{Sieve-theoretic preliminaries}\label{prelimsect}

We now move on to the preliminaries for the proof of Theorem~\ref{almostthm}. As is customary in sieve-theoretic problems, we will specify a finite set of positive integers $\mcA$, and an infinite set of primes $\PP$, and proceed to ``sift out'' the elements of $\mcA$ with prime divisors in $\PP$ which are less than some number $z$. To this end, we let $\PP$ be the set of all primes and 
\begin{equation}\label{Adef}
  \mcA=\mcA(N):=\Z\cap (N, N+3N^{2/3}).
\end{equation}
Upon setting $N=n^3$, one sees that $\mcA$ is contained in the interval $(n^3, (n+1)^3)$. We note that one could also define $\mcA$ to be the slightly larger set 
\begin{equation*}
  \Z\cap (N, N+3N^{2/3}+3N^{1/3}+1),
\end{equation*}
but the simpler \eqref{Adef} will be sufficient for the large values of $N$ we deal with.

The number of elements of $\mcA$ which survive the sifting by $\PP$ is given by the sifting function 
\begin{equation}\label{sfunctiondef}
  S(\mcA, \PP, z):=\left|\mcA\setminus \bigcup_{p|P(z)}\mcA_p\right|,
\end{equation}
where $p$ is prime, 
\begin{equation}\label{pzdef}
  \mcA_p:=\{a\in \mcA:p|a\},\quad \text{and}\quad P(z):=\prod_{\substack{p<z\\p\in \PP}}p.
\end{equation}

\subsection{An explicit linear sieve}

To prove the existence of almost primes in $\mcA$, we require upper and lower bounds for $S(\mcA, \PP, z)$. These will come from the explicit linear sieve due to Bordignon, Starichova, and the first author \cite{BJV24}. Here, we give a simplified statement of this result, also used in \cite{dudek2026almost}. 

\begin{lemma}[{Explicit version of the linear sieve \cite[Theorem~2.1]{BJV24}}]\label{linearsieve} 
  Let $\mcA$ be a finite set of positive integers and $\PP$ be an infinite set of primes such that no element of $\mcA$ is divisible by a prime in the complement $\overline{\PP}=\{p\text{ prime}:p\notin \PP\}$. Let $P(z)$ and $S(\mcA, \PP, z)$ be as in \eqref{pzdef} and \eqref{sfunctiondef} respectively. For any square-free integer $d$, let $g(d)$ be a multiplicative function with 
  \begin{equation*}
    0\le g(p)<1\text{ for all }p\in \PP,
  \end{equation*}
  and 
  \begin{equation*}
    \mcA_d:=\{a\in \mcA:d|a\}\quad \text{and}\quad r(d)=|\mcA_d|-|\mcA|g(d).
  \end{equation*}
  Let $\QQ\subset \PP$, and $Q=\prod_{q\in \QQ}q$. Suppose that, for some $\varepsilon$ satisfying $0<\varepsilon\le 1/74$, the inequality 
  \begin{equation}\label{epcondition}
    \prod_{\substack{p\in \PP\setminus \QQ\\ u\le p<z}}(1-g(p))^{-1}<(1+\varepsilon)\frac{\log z}{\log u}
  \end{equation}
  holds for all $1<u<z$. Then, for any $D\ge z$ we have the upper bound
  \begin{equation}\label{sieveupper}
    S(\mcA, \PP, z)<|\mcA|V(z)\cdot (F(s)+\varepsilon C_1(\varepsilon)e^2h(s))+R(D)
  \end{equation}
  and for any $D\ge z^2$ we have the lower bound 
  \begin{equation}\label{sievelower}
    S(\mcA, \PP, z)>|\mcA|V(z)\cdot (f(s)-\varepsilon C_2(\varepsilon)e^2h(s))-R(D),
  \end{equation}
  where
  \begin{equation*}
    s=\frac{\log D}{\log z},
  \end{equation*}
  \begin{equation}\label{hsdef}
    h(s):=\begin{cases}
      s^{-1}e^{-2},&0< s\leq 1,\\
      e^{-2}&1\le s\le 2,\\
      e^{-s}&2\le s\le 3,\\
      3s^{-1}e^{-s}& s\ge 3,
    \end{cases}
  \end{equation}
  $F(s)$ and $f(s)$ are the two functions defined by the following delay differential equations:
  \begin{equation}\label{Ffdef}
    \begin{cases}
      F(s)=\frac{2e^{\gamma}}{s},\quad f(s)=0 & \text{for } 0<s\le 2,\\
      (sF(s))'=f(s-1),\quad (sf(s))'=F(s-1)&\text{for }s\ge 2,
    \end{cases}
  \end{equation}
  $C_1(\varepsilon)$ and $C_2(\varepsilon)$ come from \cite[Table 1]{BJV24},
  \begin{equation*}
    V(z):=\prod_{p|P(z)}(1-g(p)),
  \end{equation*}
  and the remainder term is
  \begin{equation*}
    R(D):=\sum_{\substack{d|P(z)\\ d<QD}}|r(d)|.
  \end{equation*}
\end{lemma}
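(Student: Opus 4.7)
The plan is to observe that Lemma~\ref{linearsieve} is an essentially verbatim restatement of \cite[Theorem~2.1]{BJV24}, packaged in the simplified form already used in \cite{dudek2026almost}. So the first step is to verify, side by side, that the hypothesis \eqref{epcondition}---which is the standard ``dimension one'' condition controlling the average size of $g(p)$ on intervals---matches the density assumption in \cite{BJV24}, and that the definitions of $F$, $f$, $V(z)$, $h(s)$ and $R(D)$ all coincide up to trivial renaming. The piecewise function $h(s)$ in \eqref{hsdef} and the constants $C_1(\varepsilon), C_2(\varepsilon)$ are imported directly from \cite[Table~1]{BJV24}, so once the dictionary between the two statements is checked, the ``proof'' is simply an appeal to that theorem together with the observation that the role of the auxiliary set $\QQ$ is inherited unchanged.

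If one instead wished to reconstruct the bound from first principles, the route would be Iwaniec's combinatorial approach to the linear sieve. One begins with the Buchstab identity
\[S(\mcA,\PP,z)=|\mcA|-\sum_{\substack{p<z\\p\in\PP}}S(\mcA_p,\PP,p),\]
iterates it, and applies Iwaniec's truncation to terminate the recursion once the accumulated divisor exceeds $D$. The main term factors as $|\mcA|V(z)$ times combinatorial sums that, after the standard analysis of the delay-differential system \eqref{Ffdef}, are bounded above and below by $F(s)$ and $f(s)$. The leftover divisor sum collapses into $R(D)$, with the auxiliary product $Q$ absorbing the divisors supported on $\QQ$. The density hypothesis \eqref{epcondition} is precisely what converts asymptotic equalities into the quantitative correction $\varepsilon C_i(\varepsilon) e^2 h(s)$.

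The principal obstacle in such a reconstruction is not conceptual but numerical: obtaining the small constants $C_1(\varepsilon)$, $C_2(\varepsilon)$ requires a careful optimisation over the Buchstab iteration parameters, and the four regimes in \eqref{hsdef} must be handled separately because $F$ and $f$ change analytic behaviour at $s=1,2,3$. All of this bookkeeping has already been done in \cite{BJV24}, so the sensible course here is to cite that theorem and, in Section~\ref{mainsect}, verify only that the sieve setup for $\mcA=\Z\cap(N,N+3N^{2/3})$ satisfies \eqref{epcondition} with a suitable choice of $\varepsilon$, $D$ and $\QQ$. That verification---rather than any part of the sieve machinery itself---is where the real work for Theorem~\ref{almostthm} will lie.
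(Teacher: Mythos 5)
Your approach matches the paper exactly: Lemma~\ref{linearsieve} is stated without proof as a (simplified) restatement of \cite[Theorem~2.1]{BJV24}, so checking the dictionary and appealing to that theorem is all that is required. One small detail to flag is that the piece $h(s)=s^{-1}e^{-2}$ for $0\leq s\leq 1$ is \emph{not} imported from \cite{BJV24} --- the paper notes it is a nonstandard but harmless notational extension added for later convenience, since the bounds \eqref{sieveupper} and \eqref{sievelower} are only asserted for $D\geq z$ and $D\geq z^2$, i.e.\ $s\geq 1$ and $s\geq 2$ respectively.
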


From \eqref{Ffdef}, we have the following expressions for $F(s)$ when $0< s\leq 3$ and $f(s)$ when $2\le s\le 4$, which are sufficient ranges of $s$ to consider for our application.

\begin{lemma}\label{fFlem}
  Let $F(s)$ and $f(s)$ be as in \eqref{Ffdef}. Then for $0< s\leq 3$,
  \begin{equation*}
      F(s)=\frac{2e^{\gamma}}{s}
  \end{equation*}
  and for $2\leq s\leq 4$,
  \begin{equation*}
    f(s)=\frac{2e^\gamma\log(s-1)}{s}.
  \end{equation*}
\end{lemma}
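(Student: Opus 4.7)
The plan is to obtain both formulas by directly integrating the delay differential equations in (\ref{Ffdef}), working outwards in unit intervals. The key observation is that the stated ranges $s\in[0,3]$ for $F$ and $s\in[2,4]$ for $f$ require extending each function by at most one unit beyond the range in which a closed form is explicitly given in the top line of (\ref{Ffdef}), so only a single application of each recursive identity is needed.

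First, I would establish $F(s)=2e^\gamma/s$ on the full interval $(0,3]$. For $0<s\le 2$ this is immediate from (\ref{Ffdef}). For $s\in[2,3]$, I would use $(sF(s))'=f(s-1)$; since $s-1\in[1,2]$, the right-hand side vanishes by the initial condition $f\equiv 0$ on $(0,2]$. Hence $sF(s)$ is constant on $[2,3]$, and matching values at $s=2$ (where $2F(2)=2e^\gamma$) yields $F(s)=2e^\gamma/s$ throughout $[2,3]$.

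Next, for $f$ on $[2,4]$, I would integrate $(sf(s))'=F(s-1)$ from $2$ to $s$. The initial value $f(2)=0$ follows from the boundary condition, so after the substitution $u=t-1$,
\[
  sf(s)=\int_2^s F(t-1)\,dt=\int_1^{s-1} F(u)\,du.
\]
Since the previous step has already shown $F(u)=2e^\gamma/u$ on the entire range $u\in[1,3]$ (which covers $u=s-1$ for $s\in[2,4]$), this integral evaluates to $2e^\gamma\log(s-1)$, giving the stated formula upon dividing by $s$.

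There is no genuine obstacle here — the argument is a routine piecewise integration. The only care needed is in matching values at the breakpoint $s=2$ for $F$, and in observing that the range $s-1\in[1,3]$ appearing in the integral for $f$ does not escape the interval on which $F$ has the simple form $2e^\gamma/u$. It is precisely this last point that dictates the order of the two steps: one needs the formula for $F$ on $[2,3]$ (rather than merely on $(0,2]$) in order to derive $f$ on $[3,4]$.
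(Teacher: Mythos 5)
Your argument is correct: the paper states this lemma without proof as an immediate consequence of \eqref{Ffdef}, and your piecewise integration (using $f\equiv 0$ on $(0,2]$ to extend $F(s)=2e^{\gamma}/s$ to $[2,3]$, then integrating $(sf(s))'=F(s-1)$ with $f(2)=0$) is exactly the standard derivation being invoked. Your closing remark about needing $F$ on $[2,3]$ to reach $f$ on $[3,4]$ is the right point of care, and nothing is missing.
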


We remark that the definition for $h(s)$ when $0<s\leq 1$ in \eqref{hsdef} is nonstandard and in fact does not appear in the statement of \cite[Theorem~2.1]{BJV24}. This is because this range of $s$ is outside the limits of where the sieve upper and lower bounds, \eqref{sieveupper} and \eqref{sievelower}, can be applied. However, setting $h(s)=s^{-1}e^{-2}$ for $0< s\leq 1$ is convenient notation for our application, since we will require a (weak) upper bound for $S(\mcA,\PP,z)$ that essentially extends \eqref{sieveupper} to the case $D<z$.

The \emph{density function} $g(d)$ in Lemma \ref{linearsieve} should be chosen so that $|\mcA_d|\approx g(d)|\mcA|$, yielding a small value for $|r(d)|$. Since for our application, $\mcA$ is just a set of consecutive integers, we choose $g(d)=\frac{1}{d}$ which gives
\begin{equation*}
  r(d)=|\mcA_d|-\frac{|\mcA|}{d}.
\end{equation*}
With this choice of $g(d)$, we have
\begin{equation*}
    V(z)=\prod_{p<z}\left(1-\frac{1}{p}\right)
\end{equation*}
for which one has the following simple bounds.
\begin{lemma}[{\cite[Theorem~7]{rosser1962approximate}}]\label{rosserlem}
    For all $z>1$,
    \begin{equation*}
        \prod_{p<z}\left(1-\frac{1}{p}\right)\leq \frac{e^{-\gamma}}{\log z}\left(1+\frac{1}{2(\log z)^2}\right)
    \end{equation*}
    and for all $z\geq 285$,
    \begin{equation*}
        \prod_{p< z}\left(1-\frac{1}{p}\right)\geq\frac{e^{-\gamma}}{\log z}\left(1-\frac{1}{2(\log z)^2}\right).
    \end{equation*}
\end{lemma}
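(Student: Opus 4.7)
The plan is to derive these bounds by taking logarithms of the Mertens product, reducing to explicit estimates for $\sum_{p<z} 1/p$, which themselves follow from explicit prime counting results. The fact that the main term of $\prod_{p<z}(1-1/p)$ is $e^{-\gamma}/\log z$ is classical; the content of Rosser and Schoenfeld's result is to sharpen this asymptotic into a two-sided explicit bound whose secondary factor is the tight $1\pm 1/(2(\log z)^2)$.

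First I would write
\begin{equation*}
\log\prod_{p<z}\left(1-\frac{1}{p}\right) = -\sum_{p<z}\frac{1}{p} - \sum_{p<z}\sum_{k\ge 2}\frac{1}{kp^k}.
\end{equation*}
The second double sum converges as $z\to\infty$ to a constant $B$, and the classical identity $M = \gamma - B$ (relating the Meissel--Mertens constant $M$ to $\gamma$) follows from combining Mertens' first and third theorems. Thus, controlling the product reduces to controlling
\begin{equation*}
E(z) := \sum_{p<z}\frac{1}{p} - \log\log z - M
\end{equation*}
together with the tail $T(z) := \sum_{p\ge z}\sum_{k\ge 2} 1/(kp^k)$, which is trivially $O(1/(z\log z))$ by comparison with $\sum p^{-2}$.

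Next, I would apply Abel summation to $\sum_{p<z} 1/p$ against $\theta(z)=\sum_{p\le z}\log p$, and insert explicit bounds of the form $|\theta(z)-z|\le \eta(z)\,z$. Such bounds come from combining a numerical zero-free region for $\zeta(s)$ with explicit verification of the Riemann hypothesis up to a finite height. After integration by parts, one obtains an explicit majorant for $|E(z)|$ of order $(\log z)^{-3}$, which, once inserted into the identity $\prod_{p<z}(1-1/p) = (e^{-\gamma}/\log z)\cdot \exp(E(z)+T(z))$ and expanded via $e^x = 1+x+O(x^2)$, yields both claimed inequalities.

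The main obstacle is arranging the numerical constants so tightly that the secondary factor is cleanly $1\pm 1/(2(\log z)^2)$ with no further slack, rather than something like $1\pm C/(\log z)^2$ for some unspecified $C$. This forces one to combine the analytic estimates above with explicit numerical checks: the upper bound holds uniformly for all $z>1$ only because the analytic bound can be directly verified on a finite initial interval and then extended; and the threshold $z\ge 285$ in the lower bound is exactly the smallest cutoff above which the analytic error beats $1/(2(\log z)^2)$, with the lower inequality failing numerically just below it.
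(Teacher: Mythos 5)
The paper does not prove this lemma at all: it is quoted verbatim from Rosser and Schoenfeld (Theorem~7 of their 1962 paper), and the citation is the ``proof''. Your sketch does follow the same classical route that Rosser--Schoenfeld themselves take --- take logarithms, reduce to explicit estimates for $\sum_{p<z}1/p$ via Abel summation against $\theta$, feed in explicit bounds $|\theta(x)-x|\le \eta(x)x$ coming from a zero-free region plus verification of the Riemann hypothesis to finite height, and patch the small range numerically --- so as a roadmap it is faithful to the original argument.

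However, as a proof it has a genuine gap: the entire content of the lemma is the explicit constant $\tfrac12$ in the secondary factor and the thresholds $z>1$ and $z\ge 285$, and your write-up defers exactly this (``arranging the numerical constants so tightly'') without carrying any of it out. Saying that one obtains ``an explicit majorant for $|E(z)|$ of order $(\log z)^{-3}$'' and that this ``yields both claimed inequalities'' is not automatic: you need the implied constants in that majorant, explicit control of the tail $T(z)$ and of the prime-power bookkeeping in $B$, and then a quantitative comparison of $\exp(E(z)+T(z))$ with $1\pm\tfrac{1}{2(\log z)^2}$, all of which is where the real work in Rosser--Schoenfeld lies (their explicit estimates for $\sum_{p\le x}1/p$ and the attendant numerical verification). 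Your final claim that $285$ is ``exactly the smallest cutoff \ldots with the lower inequality failing numerically just below it'' is asserted rather than checked, and note also that the paper's statement uses the product over $p<z$ rather than $p\le z$, a small discrepancy with the cited theorem that itself deserves a one-line check (it only helps the upper bound and is harmless for the lower bound at non-prime $z$, but you should say so). In short: right strategy, but the quantitative core that makes the lemma true as stated is missing, so this does not yet constitute a proof.
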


Before continuing, we give an explicit version of \eqref{epcondition} with $g(p)=1/p$ and a range of $z$ that suffices for our application.

\begin{lemma}\label{PrimeProductBound}
    For all $u\ge 3$ and $z\ge 10^5$, we have 
    \begin{equation}\label{epsbound2}
        \prod_{u\le p<z}\left(1-\frac{1}{p}\right)^{-1}<(1+2.8\cdot 10^{-4})\frac{\log z}{\log u}.
    \end{equation}
\end{lemma}
\begin{proof}
    The proof of~\eqref{epsbound2} is very similar to~\cite[Proposition~A.2]{dudek2026almost}. We thereby omit some details for brevity, referring the reader to the appendix of \cite{dudek2026almost}.

    For the proof, we consider three cases for $z$.\\
    
    \noindent\textbf{Case 1:} $10^5\le z<3.5\cdot 10^5$.\\
    \noindent For this range of $z$, we verified \eqref{epsbound2} for all $3\leq u\leq z$ by a direct computation. This took just under 9 hours on a laptop with a 1.9 GHz processor. A short Python script for this code is available at \url{https://github.com/DJmath1729/MertenBounds}.\\
    \\
    \textbf{Case 2:} $3.5\cdot 10^5\le z<4\cdot 10^9$.\\
    For this range of $z$, we apply \cite[(A.1)]{dudek2026almost}, which states that
    \begin{align}\label{mert1}
        e^\gamma\log x<\prod_{p\leq x}\left(1-\frac{1}{p}\right)^{-1}<e^\gamma\log x+\frac{2e^{\gamma}}{\sqrt{x}},
    \end{align}
    for all $2\leq x\leq 4\cdot 10^9$. In particular, for $3.5\cdot 10^5\leq z<4\cdot 10^9$, \eqref{mert1} gives
    \begin{equation*}
        \prod_{u\le p<z}\left(1-\frac{1}{p}\right)^{-1}\leq \frac{\log z}{\log u}\left(1+\frac{2}{\sqrt{z}\log z}\right)<\frac{\log z}{\log u}\left(1+2.7\cdot 10^{-4}\right),
    \end{equation*}
    which is a little stronger than \eqref{epsbound2}.\\ \\
    \textbf{Case 3:} $z\ge 4\cdot 10^9$.\\ 
    This case is identical to Case 3 in the proof of \cite[Proposition~A.2]{dudek2026almost}, whereby \eqref{epsbound2} follows from \eqref{mert1} and further bounds on $\prod_{p\leq x}\left(1-1/p\right)^{-1}$ due to Vanlalngaia~\cite[Theorem~5]{vanlalngaia2017explicit}.
\end{proof}
In the notation of Lemma~\ref{linearsieve}, the inequality \eqref{epsbound2} says that one may take $Q=2$ and $\varepsilon=2.8\cdot 10^{-4}$ when $z\geq 10^5$.

\subsection{Richert's weighted sieve}\label{Richertsect}

We now give an explicit version of Richert's logarithmic sieve-weighting procedure, which first appeared in \cite{richert1969selberg}. This will allow us to obtain a numerically good lower bound for the function
\begin{equation}\label{rkdef}
    r_k(\mcA):=|\{a\in \mcA:\Omega(a)\le k\}|
\end{equation}
in terms of the sifting function $S(\mcA,\PP,z)$. Note that throughout this subsection, each result is expressed generally, so that they in fact hold for any finite set of integers $\mcA$, rather than just our choice of $\mcA$ in \eqref{Adef}. 

In what follows, let $k_1$ and $k_2$ be real numbers with $k_1\ge k_2\ge 1$ and set 
\begin{equation}\label{Xdef}
  X:=\max(\mcA),
\end{equation}
\begin{equation}\label{yzdef}
  z=X^{1/k_1},\quad y=X^{1/k_2}.
\end{equation}

Next, let $k$ be a natural number subject to the restriction $k+1>k_2$. For our application, we set $k=2$, which is indicative of the fact that we are trying to detect $a\in A$ with $\Omega(a)\leq 2$. 


For any $a\in \mcA$ with $(a, P(z))=1$, Richert's logarithmic weights are given by 
\begin{equation}\label{Richertweight}
  w(a)=\lambda - \sum_{\substack{z\le p<y\\p\in \PP,\thinspace p\mid a}}\left(1-\frac{\log p}{\log y}\right),
\end{equation}
where $\lambda$ is a parameter which we set to be
\begin{equation}\label{lambdadef}
  \lambda:=k+1-k_2.
\end{equation}

We then define a weighted sifting function $W(\mcA, \PP, z)$ by 
\begin{equation}\label{Wdef}
  W(\mcA,\PP,z)=W(\mcA, \PP, z, k_1, k_2, \lambda):=\sum_{\substack{a\in \mcA\\ (a, P(z))=1}}w(a).
\end{equation}
Our first goal is to relate $W(\mcA,\PP,z)$ to $r_k(\mcA)$. This is achieved by considering a subset $\mcA'$ of $\mcA$, defined as
\begin{equation}\label{Aprimedef}
    \mcA'=\mcA\setminus \bigcup_{\substack{z\le p<y\\ p\in \PP}}\mcA_{p^2}.
\end{equation}
Here, $\mcA'$ does not contain any elements of $\mcA$ with ``moderately large" square divisors, and for most applications
\begin{equation*}
    \sum_{\substack{z\leq p<y\\ p\in\mathcal{P}}}|\mcA_{p^2}|
\end{equation*}
is negligible compared to $\mcA$. 

\begin{lemma}\label{rklemma1}
   Let $\mcA'$ be as in \eqref{Aprimedef}. Then 
   \begin{equation*}
    r_k(\mcA) \ge \frac{1}{k}W(\mcA', \PP,z),
  \end{equation*}
  where $r_k(\mcA)$ and $W(\mcA',\PP,z)$ are as defined in \eqref{rkdef} and \eqref{Wdef} respectively. 
\end{lemma}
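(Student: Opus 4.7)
The plan is to bound the weight function pointwise and then sum. I split the sum defining $W(\mcA',\PP,z)$ according to whether $\Omega(a)\le k$ or $\Omega(a)\ge k+1$, and establish two pointwise estimates: (i) $w(a)\le k$ for every $a\in\mcA'$ with $(a,P(z))=1$, and (ii) $w(a)\le 0$ whenever additionally $\Omega(a)\ge k+1$. Combined with the inclusion $\mcA'\subseteq\mcA$, which gives $r_k(\mcA)\ge r_k(\mcA')$, these two bounds yield
\[
W(\mcA',\PP,z)\le k\cdot r_k(\mcA')\le k\cdot r_k(\mcA),
\]
which is the desired inequality.

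Step (i) is immediate from \eqref{Richertweight}: since $1-\log p/\log y\ge 0$ for $z\le p<y$, one simply drops the nonpositive summand to obtain $w(a)\le\lambda=k+1-k_2\le k$, where the final inequality uses $k_2\ge 1$.

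For step (ii), fix $a\in\mcA'$ with $(a,P(z))=1$ and $r:=\Omega(a)\ge k+1$, and write $a=p_1\cdots p_r$ with each $p_i\ge z$. The definition \eqref{Aprimedef} of $\mcA'$ forces every prime in $[z,y)$ to appear in this factorisation with multiplicity at most one. Setting $I=\{i:z\le p_i<y\}$ and $J=\{i:p_i\ge y\}$, so that $|I|+|J|=r$, one may therefore re-index the sum in \eqref{Richertweight} as
\[
\sum_{\substack{z\le p<y\\ p\mid a,\,p\in\PP}}\left(1-\frac{\log p}{\log y}\right)=|I|-\frac{1}{\log y}\sum_{i\in I}\log p_i.
\]
Using $\sum_{i\in I}\log p_i=\log a-\sum_{i\in J}\log p_i\le\log X-|J|\log y$ together with $\log X/\log y=k_2$ from \eqref{yzdef} gives
\[
w(a)\le \lambda-|I|+k_2-|J|=(k+1-k_2)-r+k_2=k+1-r\le 0.
\]

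The subtle point is step (ii), which invokes the definition of $\mcA'$ in exactly one place: to ensure each prime in $[z,y)$ divides $a$ at most once, so that the sum over primes dividing $a$ can be re-indexed as a sum over the $p_i$ with $i\in I$. The specific choice $\lambda=k+1-k_2$ in \eqref{lambdadef} is then precisely what makes the arithmetic collapse to the clean bound $k+1-r$, producing the desired nonpositivity exactly when $r\ge k+1$. The rest of the argument is bookkeeping.
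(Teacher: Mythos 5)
Your proof is correct and follows essentially the same route as the paper: both arguments hinge on the pointwise bound $w(a)\le \lambda-\Omega(a)+k_2=k+1-\Omega(a)$, obtained by using $a\le X$ together with the fact that membership in $\mcA'$ forces primes in $[z,y)$ to divide $a$ with multiplicity one. The only cosmetic difference is that you split into the cases $\Omega(a)\le k$ and $\Omega(a)\ge k+1$ up front (using $w(a)\le\lambda\le k$ in the first case), whereas the paper derives the uniform bound $k+1-\Omega(a')$ first and performs the same case distinction when summing.
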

\begin{proof}
  Let $a'\in \mcA'$ with $(a', P(z))=1$. For any prime factor $p$ of $a'$, if $p^2|a'$, then one must have $p\ge y$ and therefore $1-\frac{\log p}{\log y}\le 0$. In particular,
  \begin{align}\label{wbound1}
    \sum_{\substack{p\ge z,\thinspace \alpha\geq 1\\ p\in \PP,\thinspace p^\alpha\mid a'}}\left(1-\frac{\log p}{\log y}\right)&=\sum_{\substack{z\le p<y\\ p\in \PP,\thinspace p\mid a'}}\left(1-\frac{\log p}{\log y}\right)+\sum_{\substack{p\ge y,\thinspace \alpha\geq 1\\p\in \PP,\thinspace p^\alpha\mid a'}}\left(1-\frac{\log p}{\log y}\right)\\
    &\le \sum_{\substack{z\le p<y\\p\in \PP,\thinspace p\mid a'}}\left(1-\frac{\log p}{\log y}\right).
  \end{align}
  From here, it follows that
  \begin{equation*}
    w(a')\le \lambda-\sum_{\substack{p\ge z,\thinspace \alpha\geq 1\\ p\in \PP,\thinspace p^\alpha\mid a'}}\left(1-\frac{\log p}{\log y}\right)=\lambda-\Omega(a')+\frac{k_2\log a'}{\log X}.
  \end{equation*}
  By the definition \eqref{Xdef} of $X$ and the choice \eqref{lambdadef} of $\lambda$, one then has
  \begin{align}\label{wapfirsteq}
    W(\mcA', \PP, z)&\le \sum_{\substack{a'\in \mcA'\\ (a', P(z))=1}}\left(\lambda-\Omega(a')+\frac{k_2\log a'}{\log X}\right)\notag\\
    &\le \sum_{\substack{a'\in \mcA'\\ (a', P(z))=1}}\left(\lambda-\Omega(a')+k_2\right)\notag\\
    &=\sum_{\substack{a'\in \mcA'\\ (a', P(z))=1}}\left(k+1-\Omega(a')\right).
  \end{align}
  Now, if $\Omega(a')>k$, the summand in \eqref{wapfirsteq} is non-positive. Then, if $\Omega(a')\leq k$, the summand is at most $k$. Hence
  \begin{equation*}
    W(\mcA', \PP, z)\le k\sum_{\substack{a'\in \mcA'\\ (a', P(z))=1\\ \Omega(a')\le k}}1,
  \end{equation*}
  and in particular,
  \begin{equation*}
    r_k(\mcA)\ge r_k(\mcA')\ge \frac{1}{k}W(\mcA', \PP,z).\qedhere
  \end{equation*}
\end{proof}

Our next goal is to relate $W(\mcA',\PP,z)$ to $S(\mcA,\PP,z)$, so that $r_k(\mcA)$ may be bounded using Lemmas~\ref{linearsieve} and~\ref{rklemma1}. In what follows, we will assume the condition
\begin{equation}\tag{$Q_0(c,\delta)$}\label{q0cond}
    \sum_{\substack{z\le p<y\\ p\in \PP}}|\mcA_{p^2}|\leq c|\mcA|^{1-\delta},
\end{equation}
for some constants $c>0$ and $0<\delta<1$. This implies that $|\mcA\setminus\mcA'|$ is small.

\begin{lemma}\label{rklemma2} 
  Assuming \eqref{q0cond}, we have 
  \begin{align}\label{rklowermain}
    &r_k(\mcA)\ge \frac{\lambda}{k}S(\mcA, \PP, z)-\frac{1}{k}\sum_{\substack{p\in \PP\\ z\le p<y}}\left(1-\frac{\log p}{\log y}\right) S(\mcA_p, \PP, z)-\frac{\lambda c}{k}|\mcA|^{1-\delta},
  \end{align}
  where $\lambda$ is as defined in \eqref{lambdadef}.
\end{lemma}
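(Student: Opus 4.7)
The plan is to start from Lemma~\ref{rklemma1}, which already gives $r_k(\mcA)\ge \frac{1}{k}W(\mcA',\PP,z)$, and to show the lower bound
\begin{equation*}
W(\mcA',\PP,z)\ge \lambda\, S(\mcA,\PP,z)-\sum_{\substack{p\in\PP\\ z\le p<y}}\left(1-\frac{\log p}{\log y}\right)S(\mcA_p,\PP,z)-\lambda c|\mcA|^{1-\delta}.
\end{equation*}
Dividing by $k$ and chaining with Lemma~\ref{rklemma1} then yields the claim.

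First I would expand $W(\mcA',\PP,z)$ using the definition \eqref{Richertweight} of $w(a)$, splitting the sum over $a\in\mcA'$ with $(a,P(z))=1$ into the constant contribution $\lambda\, S(\mcA',\PP,z)$ and the double sum
\begin{equation*}
\sum_{\substack{a\in\mcA'\\(a,P(z))=1}}\sum_{\substack{z\le p<y\\ p\in\PP,\ p\mid a}}\left(1-\frac{\log p}{\log y}\right).
\end{equation*}
Interchanging the order of summation, this double sum equals
\begin{equation*}
\sum_{\substack{p\in\PP\\ z\le p<y}}\left(1-\frac{\log p}{\log y}\right)\,\bigl|\{a\in \mcA'_p:(a,P(z))=1\}\bigr|.
\end{equation*}
Since the factor $1-\log p/\log y$ is nonnegative on $z\le p<y$ and $\mcA'_p\subseteq \mcA_p$, the inner cardinality is at most $S(\mcA_p,\PP,z)$; this gives the upper bound on the subtracted term needed to lower-bound $W(\mcA',\PP,z)$.

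Second, I would relate $S(\mcA',\PP,z)$ back to $S(\mcA,\PP,z)$. By definition \eqref{Aprimedef} of $\mcA'$, the set $\mcA\setminus\mcA'$ is contained in $\bigcup_{z\le p<y,\,p\in\PP}\mcA_{p^2}$, so under the hypothesis $Q_0(c,\delta)$ we have $|\mcA\setminus\mcA'|\le c|\mcA|^{1-\delta}$ and hence
\begin{equation*}
S(\mcA',\PP,z)\ge S(\mcA,\PP,z)-c|\mcA|^{1-\delta}.
\end{equation*}
Combining the two bounds gives the displayed inequality for $W(\mcA',\PP,z)$, and dividing by $k$ and invoking Lemma~\ref{rklemma1} finishes the proof.

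There is no serious obstacle here: the argument is purely manipulative, using only the nonnegativity of $1-\log p/\log y$ on the range of summation, the trivial containment $\mcA'_p\subseteq\mcA_p$, and the hypothesis $Q_0(c,\delta)$ to control the defect between sifting $\mcA$ and sifting $\mcA'$. The only place where care is needed is making sure the direction of inequality is preserved when replacing $\mcA'_p$ by $\mcA_p$ in a term that is being subtracted; this is why one must check that $1-\log p/\log y\ge 0$ for $z\le p<y$.
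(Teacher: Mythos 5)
Your proof is correct and is essentially the paper's argument: both start from Lemma~\ref{rklemma1}, expand Richert's weights to produce $\lambda S(\mcA,\PP,z)-\sum_{p}\bigl(1-\frac{\log p}{\log y}\bigr)S(\mcA_p,\PP,z)$, and absorb the discrepancy between $\mcA$ and $\mcA'$ into the $\lambda c|\mcA|^{1-\delta}$ term via \eqref{q0cond}. The only (cosmetic) difference is where that error enters: the paper bounds the weights of the discarded square-divisible elements using $w(a)\le\lambda$, whereas you use $S(\mcA',\PP,z)\ge S(\mcA,\PP,z)-c|\mcA|^{1-\delta}$ together with $\mcA'_p\subseteq\mcA_p$ and the nonnegativity of $1-\frac{\log p}{\log y}$ on $z\le p<y$; your final multiplication by $\lambda$ implicitly uses $\lambda>0$, which is exactly the standing assumption $k+1>k_2$.
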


\begin{proof}
  From Lemma \ref{rklemma1},
  \begin{align}\label{rklower1}
    r_k(\mcA)&\ge \frac{1}{k}W(\mcA', \PP, z)\notag\\
    &\ge\frac{1}{k}\left(\sum_{\substack{a\in \mcA\\(a,P(z))=1}}w(a)-\sum_{\substack{z\le p<y\\p\in \PP}}\sum_{\substack{a\in \mcA_{p^2}\\ (a, P(z))=1}}w(a)\right).
  \end{align}

    Since $w(a)\leq\lambda$,  the second term in \eqref{rklower1} can be bounded using \eqref{q0cond} by
    \begin{align}\label{rkfirst}
        \sum_{\substack{z\le p<y\\p\in \PP}}\sum_{\substack{a\in \mcA_{p^2}\\ (a, P(z))=1}}w(a)&\le \lambda\sum_{\substack{z\le p<y\\p\in \PP}}|\mcA_{p^2}|\notag\\
        &\le \lambda c|\mcA|^{1-\delta}.
    \end{align}

    For the first term in \eqref{rklower1}, we then have
    \begin{align}\label{rksecond}
      \sum_{\substack{a\in \mcA\\(a, P(z))=1}}w(a)&=\sum_{\substack{a\in \mcA\\(a, P(z))=1}}\left(\lambda-\sum_{\substack{p\in \PP\\ p|a\\ z\le p<y}}\left(1-\frac{\log p}{\log y}\right)\right)\notag\\
      &=\lambda S(\mcA, \PP, z)-\sum_{\substack{p\in \PP\\ z\le p<y}}\left(1-\frac{\log p}{\log y}\right)S(\mcA_p, \PP, z).
  \end{align}
  Substituting \eqref{rkfirst} and \eqref{rksecond} into \eqref{rklower1} yields the claimed bound for $r_k(\mcA)$.
\end{proof}

\subsection{An upper bound for the sum over \texorpdfstring{$S(\mcA_p,\PP,z)$}{}}
From Lemma \ref{rklemma2}, it appears that we are almost in a position to bound $r_k(\mcA)$ via an application of the explicit linear sieve (Lemma~\ref{linearsieve}). However, care is still required to explicitly bound the sum
\begin{equation}\label{Sapsum}
    \sum_{\substack{p\in \PP\\ z\le p<y}}\left(1-\frac{\log p}{\log y}\right) S(\mcA_p, \PP, z).
\end{equation}
This will be the goal of the rest of this section. We begin with a couple of lemmas from the literature.

\begin{lemma}[{\cite[\S1.3.5, Lemma 1 (ii)]{greaves2013sieves}}]\label{greaveslemma}
  Let $f(t)$ be a positive monotone function defined for $z\le t\le y$ with $f'(t)$ piecewise continuous on $[z,y]$, and $c(n)$ be an arithmetic function satisfying 
  \begin{equation*}
    \sum_{x\le n<w}c(n)\le g(w)-g(x)+E,
  \end{equation*}
  for some constant $E$ whenever $z\le x<w\le y$. Then,
  \begin{equation*}
    \sum_{z\le n<y}c(n)f(n)\le \int_z^y f(t)g'(t)\mathrm{d}t+E\max (f(z),f(y)).
  \end{equation*}
\end{lemma}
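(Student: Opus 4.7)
The plan is to apply Abel summation (equivalently, Riemann--Stieltjes integration by parts), splitting into two cases depending on whether $f$ is monotone increasing or decreasing.

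First I would set $C(t):=\sum_{z\le n<t}c(n)$, so that $C(z)=0$ and the hypothesis reads $C(t_2)-C(t_1)\le g(t_2)-g(t_1)+E$ for all $z\le t_1\le t_2\le y$. The piecewise continuity of $f'$ lets us apply Abel summation, producing the two equivalent identities
\begin{align*}
\sum_{z\le n<y}c(n)f(n) &= f(y)C(y) - \int_z^y f'(t)C(t)\,\mathrm{d}t \\
&= f(z)C(y) + \int_z^y f'(t)\bigl(C(y)-C(t)\bigr)\,\mathrm{d}t.
\end{align*}
These are tailored to the two monotonicity cases: in each case only one of them admits substitution of the hypothesis in a way that preserves the inequality.

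If $f$ is decreasing I would use the first identity. Since $-f'(t)\ge 0$ and $f(y)\ge 0$, the bounds $C(y)\le g(y)-g(z)+E$ and $C(t)\le g(t)-g(z)+E$ may be substituted to obtain the upper bound
\begin{equation*}
f(y)\bigl[g(y)-g(z)+E\bigr]+\int_z^y(-f'(t))\bigl[g(t)-g(z)+E\bigr]\,\mathrm{d}t.
\end{equation*}
A standard integration by parts on $\int_z^y(-f'(t))g(t)\,\mathrm{d}t$, combined with $\int_z^y(-f'(t))\,\mathrm{d}t=f(z)-f(y)$, produces enough cancellation of boundary terms that the right-hand side collapses to $\int_z^y f(t)g'(t)\,\mathrm{d}t+Ef(z)$. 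Since $f$ is decreasing, $f(z)=\max(f(z),f(y))$, giving the claim. If $f$ is increasing, the symmetric argument applied to the second identity uses $f'(t)\ge 0$, $f(z)\ge 0$, the bound $C(y)\le g(y)-g(z)+E$, and the hypothesis with $x=t$, $w=y$ in the form $C(y)-C(t)\le g(y)-g(t)+E$; the same IBP calculation yields $\int_z^y f(t)g'(t)\,\mathrm{d}t+Ef(y)$, and $f(y)=\max(f(z),f(y))$.

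The main technical obstacle is mild: one has to pick the right Abel-summation decomposition in each monotonicity case so that the sign of $f'$ aligns with the direction of the partial-sum inequality, and then track the boundary terms carefully enough to see that they conspire to produce exactly $E\max(f(z),f(y))$ rather than a weaker constant. Note also that no positivity assumption on $c(n)$ is needed: all we ever use is $C(t)\le g(t)-g(z)+E$ (or the shifted version), multiplied by the nonnegative factors $f(y)$, $f(z)$, $-f'(t)$, or $f'(t)$ as appropriate.
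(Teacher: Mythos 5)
Your proof is correct: both Abel-summation identities are valid, the sign conditions ($f(y),f(z)\ge 0$ and $\mp f'(t)\ge 0$ in the respective monotonicity cases) justify substituting the partial-sum hypothesis, and the boundary terms cancel exactly to leave $\int_z^y f(t)g'(t)\,\mathrm{d}t$ plus $Ef(z)$ or $Ef(y)$, i.e.\ $E\max(f(z),f(y))$. The paper itself gives no proof of this statement (it is quoted directly from Greaves \cite{greaves2013sieves}), and your partial-summation argument is essentially the standard proof found there.
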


\begin{lemma}[{\cite[Corollary 1]{vanlalngaia2017explicit}}]\label{vanlalngaia2017explicit}
  For any $b>a>1000$,
  \begin{equation*}
    \sum_{a\le p<b}\frac{1}{p}<\log \log b-\log \log a+\frac{5}{\left(\log a\right)^3}.
  \end{equation*}
\end{lemma}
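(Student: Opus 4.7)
Since Lemma~\ref{vanlalngaia2017explicit} is quoted directly from Vanlalngaia's paper, the plan is to sketch how I would derive it from an explicit form of Mertens' second theorem. The natural starting point is the trivial identity
\begin{equation*}
\sum_{a\le p<b}\frac{1}{p} \;=\; \sum_{p<b}\frac{1}{p}-\sum_{p<a}\frac{1}{p},
\end{equation*}
which reduces the task to controlling the partial sum $\sum_{p<x}1/p$ at the two endpoints $x=a$ and $x=b$.

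First, I would invoke an explicit version of Mertens' theorem of the shape
\begin{equation*}
\sum_{p\le x}\frac{1}{p} \;=\; \log\log x + M + R(x), \qquad |R(x)|\le \frac{c}{(\log x)^{3}},
\end{equation*}
valid for $x$ above a suitable threshold, where $M$ is the Meissel--Mertens constant and $c$ is an explicit constant. Subtracting the two instances (at $b$ and $a$) cancels $M$ and gives
\begin{equation*}
\sum_{a\le p<b}\frac{1}{p} \;=\; \log\log b - \log\log a + R(b) - R(a),
\end{equation*}
and then the triangle inequality, together with $b>a$, yields
\begin{equation*}
|R(b)-R(a)|\le |R(b)|+|R(a)|\le \frac{2c}{(\log a)^{3}}.
\end{equation*}
It would then suffice to show $2c\le 5$ for $a>1000$, matching the constant in the statement.

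The main obstacle is therefore producing the explicit bound $|R(x)|\le c/(\log x)^{3}$ with $c\le 5/2$ for $x>1000$. The standard route is to apply partial summation to an explicit estimate on the Chebyshev function $\theta(x)=\sum_{p\le x}\log p$, of the form $|\theta(x)-x|\le C\, x/(\log x)^{N}$ with $N$ sufficiently large, and to combine this with the integral representation of the Meissel--Mertens constant in order to identify the constant term. The delicate point is calibrating $c$ tightly enough: this typically relies on sharp zero-free regions for $\zeta(s)$, modern explicit bounds on $\theta(x)$ due to Dusart, Schoenfeld and others, and a direct numerical verification on a finite range below the chosen threshold. This explicit accounting is precisely the technical content of Vanlalngaia's paper~\cite{vanlalngaia2017explicit}, to which I would defer for the final numerical calibration of the constant $5$.
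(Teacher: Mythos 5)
The paper offers no proof of this lemma at all: it is imported verbatim as Corollary 1 of \cite{vanlalngaia2017explicit}, so your treatment --- splitting the sum at the endpoints, invoking an explicit Mertens-type estimate with error $O\bigl(1/(\log x)^{3}\bigr)$, and deferring the numerical calibration of the constant to Vanlalngaia --- is in substance the same appeal to that reference. Your sketch of how the bound would be assembled (triangle inequality on the two error terms, using $b>a$ so both are at most $c/(\log a)^{3}$) is sound, with the only outstanding item being exactly the explicit constant you correctly identify as the content of the cited paper.
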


We now prove the key proposition, which gives an upper bound for \eqref{rklowermain} with our choice \eqref{Adef} of $\mcA$. This can be directly compared to \cite[Proposition~3.5]{dudek2026almost}, where a similar upper bound is obtained in the context of Kuhn's weighted sieve, which is simpler yet less optimal than our use of Richert's weighted sieve. Compared to the argument in \cite{dudek2026almost} we have also weakened the restriction on $\alpha$ (see \eqref{alphabounds}), which gives a greater flexibility in parameters, but requires us to carefully consider different ranges of $q$ in the sum \eqref{Sapsum}.

\begin{proposition}\label{newprop35-2}
    Let $\mcA$ be as defined in \eqref{Adef}, $\mathcal{P}$ be the set of all primes, and $X,y,z$ be as in \eqref{Xdef} and \eqref{yzdef}. Suppose $y>z>1000$ and $k_1\le 6$. Let $\alpha$ be a real number such that 
    \begin{equation}\label{alphabounds}
        0<\alpha<\frac{2}{3}-\frac{1}{k_2}
    \end{equation}
    and set
    \begin{equation*}
        k_\alpha=k_1\left(\frac{2}{3}-\frac{1}{k_2}-\alpha\right).
    \end{equation*}
    Also, let 
    \begin{equation*}
        D_q:=\frac{X^{\frac{2}{3}-\alpha}}{q},\quad s_q=\frac{\log D_q}{\log z},\quad \widetilde{D}=\min\{D_y,z\},
    \end{equation*}
    $\mathcal{Q}$ be a set of primes, $Q=\prod_{q\in\mathcal{Q}}q$ and $\varepsilon>0$ be such that
    \begin{equation*}
        \prod_{\substack{p\in \PP\setminus \QQ\\ u\le p<x}}\left(1-\frac{1}{p}\right)^{-1}<(1+\varepsilon)\frac{\log x}{\log u}
    \end{equation*}
    for all $x\geq\widetilde{D}$ and $1<u\leq x$.
    Then,
    \begin{equation}\label{sumupper}
        \sum_{\substack{q\in \PP\\ z\le q<y}}\left(1-\frac{\log q}{\log y}\right)S(\mcA_q, \PP, z)< k_1 e^{-\gamma}\left(1+\frac{1}{2(\log \widetilde{D})^2}\right)(M_1(X)+M_2(X))+\mathcal{E}(X),
    \end{equation}
    where 
    \begin{align}
        &M_1(X)\notag\\
        &:=\frac{3N^{2/3}}{\log X}\left[\frac{2e^{\gamma}}{k_1}\left(\frac{1}{(2-3\alpha)}\left(3\log \frac{k_1}{k_2}+(k_2(2-3\alpha)-3)\log \frac{2-3\alpha-\frac{3}{k_2}}{2-3\alpha-\frac{3}{k_1}}\right)\right.\right.\notag\\
        &\left.+\frac{5k_1^4}{k_\alpha(\log X)^3}\left(1-\frac{k_2}{k_1}\right)\right)
        \left.+\varepsilon C_1(\varepsilon)e^2h(k_\alpha)\left(\log \frac{k_1}{k_2}-1+\frac{k_2}{k_1}+\frac{5k_1^3}{(\log X)^3}\left(1-\frac{k_2}{k_1}\right)\right)\right],\label{M1def}\\
        &M_2(X):=\frac{y}{\log X}\left(1-\frac{k_2}{k_1}\right)\left(\frac{2e^\gamma}{k_\alpha}+\varepsilon C_1(\varepsilon)e^2h(k_\alpha)\right),\label{M2def}\\
        &\mathcal{E}(X):=QX^{\frac{2}{3}-\alpha}\left(\log \frac{k_1}{k_2}-1+\frac{k_2}{k_1}+\frac{5k_1^3}{\left(\log X\right)^3}\left(1-\frac{k_2}{k_1}\right)\right).\label{calEdef}
    \end{align}
    with $C_1(\varepsilon)$ and $h(s)$ as defined in Lemma~\ref{linearsieve}.
\end{proposition}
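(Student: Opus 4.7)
The plan is to apply the explicit linear sieve (Lemma~\ref{linearsieve}) to each set $\mcA_q$ individually, with density function $g(d)=1/d$ and sieve parameter $D_q=X^{2/3-\alpha}/q$, then sum the resulting pointwise bounds against the weight $1-\log q/\log y$ over primes $q\in[z,y)$. Since $\mcA_q$ is an arithmetic progression of common difference $q$ inside the interval $(N,N+3N^{2/3}]$, one has $|\mcA_q|\le 3N^{2/3}/q+1$, and the local remainder satisfies $|r_q(d)|\le 1$, so the linear-sieve remainder for each $\mcA_q$ is at most $QD_q$. The $\varepsilon$-hypothesis in the proposition is exactly what is needed so that \eqref{epcondition} is available for every $\mcA_q$ at the relevant sifting levels.

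Combining \eqref{sieveupper} with Lemma~\ref{rosserlem} (replacing $\log z$ by $\log\widetilde{D}\le\log z$ in the error factor, which is a legitimate weakening) and with $F(s_q)=2e^\gamma/s_q$ from Lemma~\ref{fFlem} (valid since $s_q\le k_1(2/3-\alpha)-1\le 3$ under $k_1\le 6$), I obtain
\[
S(\mcA_q,\PP,z)<\frac{e^{-\gamma}}{\log z}\Bigl(1+\frac{1}{2(\log\widetilde{D})^2}\Bigr)|\mcA_q|\bigl(F(s_q)+\varepsilon C_1(\varepsilon)e^2 h(s_q)\bigr)+QD_q
\]
uniformly in $q$. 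The extended convention $h(s)=s^{-1}e^{-2}$ on $[0,1]$ from \eqref{hsdef} allows this bound to be used even for those $q$ with $D_q<z$, i.e.\ $q>X^{2/3-\alpha-1/k_1}$, which avoids an awkward case split.

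Splitting $|\mcA_q|\le 3N^{2/3}/q+1$ now generates two qualitatively different contributions. For the $3N^{2/3}/q$ piece, I convert the prime sum to an integral via Lemma~\ref{greaveslemma} together with Lemma~\ref{vanlalngaia2017explicit}; the change of variables $u=\log t/\log X$ reduces the principal integral to
\[
\int_{1/k_1}^{1/k_2}\frac{1-k_2u}{u(2/3-\alpha-u)}\,du,
\]
which a partial-fraction decomposition evaluates in closed form, producing the bracketed expression of $M_1(X)$. The tail error from Lemma~\ref{vanlalngaia2017explicit} is $5k_1^3/\log^3 X$ times $\max(f(z),f(y))=(1-k_2/k_1)/(2/3-\alpha-1/k_1)$, which I further bound by $k_1(1-k_2/k_1)/k_\alpha$ using $2/3-\alpha-1/k_1\ge k_\alpha/k_1$, giving the explicit error term in $M_1$. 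For the $\varepsilon$-part, using the monotonicity bound $h(s_q)\le h(k_\alpha)$ reduces the corresponding integral to $\int_{1/k_1}^{1/k_2}(1-k_2u)\,du/u=\log(k_1/k_2)-1+k_2/k_1$, producing the remaining piece of $M_1$. For the $+1$ boundary part, I use the crude uniform bounds $(1-\log q/\log y)\le 1-k_2/k_1$, $F(s_q)\le 2e^\gamma/k_\alpha$, $h(s_q)\le h(k_\alpha)$, together with the trivial count $\pi(y)-\pi(z)\le y$, producing $V(z)(1-k_2/k_1)(2e^\gamma/k_\alpha+\varepsilon C_1(\varepsilon)e^2 h(k_\alpha))\,y$, which is exactly $k_1 e^{-\gamma}(1+1/(2\log^2\widetilde{D}))\cdot M_2(X)$. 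Finally, summing the sieve remainder yields
\[
\sum_{z\le q<y}\Bigl(1-\frac{\log q}{\log y}\Bigr)QD_q=QX^{2/3-\alpha}\sum_{z\le q<y}\frac{1-\log q/\log y}{q},
\]
and another application of Lemma~\ref{greaveslemma}+Lemma~\ref{vanlalngaia2017explicit} to the remaining sum produces precisely $\mathcal{E}(X)$.

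The principal obstacle will be careful bookkeeping: matching the four contributions—main term, $\varepsilon$-correction, $+1$ boundary, and sieve remainder—to their respective pieces of $M_1$, $M_2$, and $\mathcal{E}$, while using the extended $h$-convention uniformly across $q$ to sidestep the region $D_q<z$. The partial-fraction integral and the applications of Lemmas~\ref{greaveslemma} and~\ref{vanlalngaia2017explicit} are otherwise routine but must be carried through with their explicit error terms in order to produce the stated constants.
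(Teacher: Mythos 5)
Your overall route is the same as the paper's: apply Lemma~\ref{linearsieve} to each $\mcA_q$ with $g(d)=1/d$ and level $D_q$, use $|r_q(d)|\le 1$ and $|\mcA_q|\le 3N^{2/3}/q+1$, and convert the resulting weighted prime sums into integrals via Lemmas~\ref{greaveslemma} and~\ref{vanlalngaia2017explicit}, with the three contributions matched to $M_1$, $M_2$ and $\mathcal{E}$. However, there is a genuine gap where you claim that the extended convention $h(s)=s^{-1}e^{-2}$ on $[0,1]$ ``allows this bound to be used even for those $q$ with $D_q<z$'' and thereby avoids a case split. The upper bound \eqref{sieveupper} in Lemma~\ref{linearsieve} is only available for $D\ge z$, and the remark following Lemma~\ref{fFlem} says explicitly that $0\le s\le 1$ lies outside the range where the sieve bounds apply: the extended $h$ is bookkeeping, not a licence to invoke \eqref{sieveupper} with $D=D_q<z$. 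For $q>X^{2/3-\alpha-1/k_1}$ you therefore need an argument, and it is exactly the case split you propose to skip: bound $S(\mcA_q,\PP,z)\le S(\mcA_q,\PP,D_q)$ by monotonicity of sifting, apply \eqref{sieveupper} with sifting level $D_q$ and $D=D_q$ (so $s=1$ and $F(1)+\varepsilon C_1(\varepsilon)e^2h(1)=2e^{\gamma}+\varepsilon C_1(\varepsilon)$), and then use Lemma~\ref{rosserlem} together with $D_q\ge D_y\ge\widetilde{D}$ to get $V(D_q)\le \frac{k_1e^{-\gamma}}{s_q\log X}\bigl(1+\frac{1}{2(\log\widetilde{D})^2}\bigr)$, which recovers a bound of the same shape as in the range $D_q\ge z$, with $e^2h(s_q)=1/s_q$. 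This second case is also the reason the $\varepsilon$-hypothesis is imposed for all $x\ge\widetilde{D}$ rather than just $x\ge z$, and why $\widetilde{D}$ appears in \eqref{sumupper}; your proposal gestures at this but never actually uses the hypothesis at the smaller sifting levels.

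Separately, your assertion that the partial-fraction evaluation of $\int_{1/k_1}^{1/k_2}\frac{1-k_2u}{u(2/3-\alpha-u)}\,du$ ``produces the bracketed expression of $M_1(X)$'' should be checked rather than asserted: partial fractions give
\[
\frac{1}{2-3\alpha}\left(3\log\frac{k_1}{k_2}+\bigl(k_2(2-3\alpha)-3\bigr)\log\frac{2-3\alpha-\frac{3}{k_2}}{2-3\alpha-\frac{3}{k_1}}\right),
\]
whereas \eqref{M1def} contains the prefactor $\frac{1}{9(2+3\alpha)}$ and $2+3\alpha$ throughout; with the parameters of Section~\ref{mainsect} ($k_1=5$, $k_2=1.85$, $\alpha=0.03$) the two quantities differ by roughly a factor of ten. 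So either the normalisation of $\log D_q$ in your substitution needs rechecking, or the constant in \eqref{M1def} needs to be reconciled with the direct evaluation; as written your computation does not reproduce the stated bound, and the size of the discrepancy is not negligible for the numerical application in Section~\ref{mainsect}.
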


\begin{proof}
    We begin by giving an upper bound for $S(\mcA_q,\PP,z)$ that holds for all ${z\leq q<y}$. For this, we let
    \begin{equation*}
        w:=X^{\frac{2}{3}-\alpha-\frac{1}{k_1}}
    \end{equation*}
    and separately consider the cases $z\leq q\leq w$ and $w<q<y$.\\ \\
    \textbf{Case 1: $z\leq q\leq w$}\\
    In this case, $D_q\geq z$ so that we may directly apply \eqref{sieveupper} of Lemma~\ref{linearsieve} with $g(d)=1/d$, $D=D_q$ and $s=s_q$ to give
    \begin{align}\label{ub1}
        S(\mcA_q, \PP, z)<
        |\mcA_q|V(z)(F(s_q)+\varepsilon C_1(\varepsilon)e^2h(s_q))+\sum_{\substack{d\mid P(z)\\d<QD_q}}|r_q(d)|,
    \end{align}
    where
    \begin{equation}\label{rqdef}
        r_q(d):=|\mcA_{qd}|-\frac{|\mcA_q|}{d}
    \end{equation}
    and $F(s_q)$ is as defined in \eqref{Ffdef}. Now, by Lemma~\ref{rosserlem},
    \begin{equation*}
      V(z)=\prod_{p\le z}\left(1-\frac{1}{p}\right)\leq\frac{e^{-\gamma}}{\log z}\left(1+\frac{1}{2(\log z)^2}\right)\leq\frac{k_1e^{-\gamma}}{\log X}\left(1+\frac{1}{2(\log \widetilde{D})^2}\right),
    \end{equation*}
    so that \eqref{ub1} can be bounded above further by
    \begin{multline}\label{ub2}
        k_1e^{-\gamma}\left(1+\frac{1}{2(\log \widetilde{D})^2}\right)|\mcA_q|\left(\frac{F(s_q)+\varepsilon C_1(\varepsilon)e^2h(s_q)}{\log X}\right)+\sum_{\substack{d\mid P(z)\\d<QD_q}}|r_q(d)|.
    \end{multline}
    \noindent\textbf{Case 2: $w< q< y$}\\
    In this case, since $q>w$ we have $z>D_q$. So, we can apply the simple bound
    \begin{equation*}
      S(\mcA_q, \PP, z)\le S(\mcA_q, \PP, D_q),
    \end{equation*}
    noting that $S(\mcA_q,\PP,z)$ sifts out more elements of $\mcA$ than $S(\mcA_q,\PP,D_q)$.

    Therefore, applying \eqref{sieveupper} of Lemma~\ref{linearsieve} with $D=D_q$ and $s=1$, we obtain
    \begin{align}\label{SAqDqfirst}
        S(\mcA_q,\PP,z)&\leq|\mcA_q|V(D_q)(F(1)+\varepsilon C_1(\varepsilon)e^2h(1))+\sum_{\substack{d\mid P(z)\\d<QD_q}}|r_q(d)|,\notag\\
        &=|\mcA_q|V(D_q)(2e^{\gamma}+\varepsilon C_1(\varepsilon))+\sum_{\substack{d\mid P(z)\\d<QD_q}}|r_q(d)|
    \end{align}
    with $r_q(d)$ again as defined in \eqref{rqdef}. Here, Lemma~\ref{rosserlem} gives,
    \begin{equation}\label{Vdqupper}
        V(D_q)\leq\frac{e^{-\gamma}}{\log D_q}\left(1+\frac{1}{2(\log D_q)^2}\right)\leq\frac{k_1e^{-\gamma}}{s_q\log X}\left(1+\frac{1}{2(\log \widetilde{D})^2}\right).
    \end{equation}
    Substituting \eqref{Vdqupper} into \eqref{SAqDqfirst} and using that $D_q\ge D_y$,
    \begin{align}\label{largequb}
      &S(\mcA_q, \PP, z)\notag\\
      &\le \frac{k_1e^{-\gamma}}{\log X}\left(1+\frac{1}{2(\log D_y)^2}\right)|\mcA_q|\left(\frac{2e^{\gamma}}{s_q}+\frac{\varepsilon C_1(\varepsilon)}{s_q}\right)+\sum_{\substack{d\mid P(z)\\d<QD_q}}|r_q(d)|\notag\\
      &= k_1e^{-\gamma}\left(1+\frac{1}{2(\log \widetilde{D})^2}\right)|\mcA_q|\left(\frac{F(s_q)+\varepsilon C_1(\varepsilon)e^2h(s_q)}{\log X}\right)+\sum_{\substack{d\mid P(z)\\d<QD_q}}|r_q(d)|.
    \end{align}
    \ \\
    Notably, in both cases above we have obtained the same upper bound for $S(\mcA_q,\PP,z)$, \eqref{ub2} and \eqref{largequb}. Hence, we can write
    \begin{align}\label{bigsapeq}
      &\sum_{\substack{q\in \PP\\ z\le q<y}}\left(1-\frac{\log q}{\log y}\right)S(\mcA_q, \PP, z)\notag\\
      &\le k_1e^{-\gamma}\left(1+\frac{1}{2(\log \widetilde{D})^2}\right)\sum_{\substack{q\in \PP\\ z\le q < y}}\left(1-\frac{\log q}{\log y}\right)|\mcA_q|\left(\frac{F(s_q)+\varepsilon C_1(\varepsilon)e^2h(s_q)}{{\log X}}\right)\notag\\
      &\qquad\qquad\qquad\qquad\qquad\qquad\qquad\qquad+\sum_{\substack{q\in \PP\\ z\le q < y}}\left(1-\frac{\log q}{\log y}\right)\sum_{\substack{d\mid P(z)\\d<QD_q}}|r_q(d)|.
    \end{align}
    To bound the remainder term in \eqref{bigsapeq}, we note that since $\mcA$ is a set of consecutive integers, one has 
    \begin{equation*}
      |r_q(d)|=\left||\mcA_{qd}|-\frac{|\mcA_q|}{d}\right|\le 1.
    \end{equation*}
    Therefore,
    \begin{equation*}
        \sum_{\substack{z\le q<y\\q\in \mathcal{P}}}\left(1-\frac{\log q}{\log y}\right)\sum_{\substack{d\mid P(z)\\d<QD_q}}|r_q(d)|\le QX^{\frac{2}{3}-\alpha}\sum_{\substack{z\le q<y\\q\in \mathcal{P}}}\left(1-\frac{\log q}{\log y}\right)\frac{1}{q}.
    \end{equation*}
    We can now apply Lemmas~\ref{greaveslemma} and \ref{vanlalngaia2017explicit}. In particular, in the notation of Lemma~\ref{greaveslemma}, setting $g(x)=\log\log x$, $E=\frac{5k_1^3}{(\log X)^3}$,
    \begin{equation*}
        c(n)=\begin{cases}
        \frac{1}{n}&n\text{ prime}\\
        0&\text{ else}
        \end{cases},
    \end{equation*}
    and $f(t)=1-\frac{\log t}{\log y}$, yields
    \begin{equation}\label{ubl32}
        \sum_{\substack{z\le q<y\\q\in \mathcal{P}}}\left(1-\frac{\log q}{\log y}\right)\frac{1}{q}\le \log \frac{k_1}{k_2}-1+\frac{k_2}{k_1}+\frac{5k_1^3}{(\log X)^3}\left(1-\frac{k_2}{k_1}\right),
    \end{equation}
    with the value of $E=5k_1^3/(\log X)^3$ arising from Lemma~\ref{vanlalngaia2017explicit}.
    Thus, 
    \begin{align*}
        &\sum_{\substack{z\le q<y\\q\in \mathcal{P}}}\left(1-\frac{\log q}{\log y}\right)\sum_{\substack{d\mid P(z)\\d<QD_q}}|r_q(d)|\\
        &\qquad\le QX^{\frac{2}{3}-\alpha}\left(\log \frac{k_1}{k_2}-1+\frac{k_2}{k_1}+\frac{5k_1^3}{\left(\log X\right)^3}\left(1-\frac{k_2}{k_1}\right)\right)=\mathcal{E}(X).
    \end{align*}

    Next we deal with the main term in \eqref{bigsapeq}. Here, we begin by noting that
    \begin{equation}\label{Aqsimp}
        |\mcA_q|\le \frac{3N^{2/3}}{q}+1
    \end{equation}
    since $\mcA$ is a set of consecutive integers. Now, because $k_1\le 6$, we have $0<s_q\le 3$ and thus by Lemma~\ref{fFlem},
    \begin{equation}\label{Fsqsimp}
        F(s_q)=\frac{2e^\gamma}{s_q}=\frac{2e^\gamma\log X}{k_1\log D_q}.
    \end{equation}

    Using \eqref{Aqsimp}, \eqref{Fsqsimp}, the bounds $s_q\ge k_\alpha$ and $D_q\ge D_y$, and the fact that $h(s)$ is non-increasing, we then get
    \begin{multline}\label{ubm1}
        \sum_{\substack{q\in \PP\\ z\le q<y}}\left(1-\frac{\log q}{\log y}\right)|\mcA_q|\left(\frac{F(s_q)+\varepsilon C_1(\varepsilon)e^2h(s_q)}{\log X}\right)\\
        \le 3N^{2/3}\sum_{z\le q<y}\frac{1}{q}\left(1-k_2\frac{\log q}{\log X}\right)\left(\frac{2e^\gamma}{k_1\log D_q}+\frac{\varepsilon C_1(\varepsilon)e^2 h(k_\alpha)}{\log X}\right)\\
        +\sum_{z\le q<y}\left(1-\frac{k_2}{k_1}\right)\left(\frac{2e^\gamma}{k_1\log D_y}+\frac{\varepsilon C_1(\varepsilon)e^2 h(k_\alpha)}{\log X}\right).
    \end{multline}

    For most choices of parameters, the second term in \eqref{ubm1} is negligible and can be bounded as
    \begin{multline*}
        \sum_{z\le q<y}\left(1-\frac{k_2}{k_1}\right)\left(\frac{2e^\gamma}{k_1\log D_y}+\frac{\varepsilon C_1(\varepsilon)e^2 h(k_\alpha)}{\log X}\right)\\
        \le \frac{y}{\log X}\left(1-\frac{k_2}{k_1}\right)\left(\frac{2e^\gamma}{k_\alpha}+\varepsilon C_1(\varepsilon)e^2h(k_\alpha)\right)=M_2(X).
    \end{multline*}

    Finally, for the first term in \eqref{ubm1}, we apply Lemmas~\ref{greaveslemma} and \ref{vanlalngaia2017explicit} as before (see \eqref{ubl32}) to obtain
    \begin{align*}
        &\sum_{z\le q<y}\left(1-k_2\frac{\log q}{\log X}\right)\frac{1}{q\log D_q}\\
        &\qquad\le \frac{1}{(2-3\alpha)\log X}\left(3\log \frac{k_1}{k_2}+(k_2(2-3\alpha)-3)\log \frac{2-3\alpha-\frac{3}{k_2}}{2-3\alpha-\frac{3}{k_1}}\right)\\
        &\qquad\qquad\qquad\qquad\qquad\qquad\qquad\qquad\qquad\qquad\qquad+\frac{5k_1^4}{k_\alpha(\log X)^4}\left(1-\frac{k_2}{k_1}\right).
    \end{align*}
    When combined with \eqref{ubl32} and \eqref{ubm1}, this gives
    \begin{equation*}
        \sum_{\substack{q\in \PP\\ z\le q<y}}\left(1-\frac{\log q}{\log y}\right)|\mcA_q|\left(\frac{F(s_q)+\varepsilon C_1(\varepsilon)e^2h(s_q)}{\log X}\right)\le M_1(X)+M_2(X),
    \end{equation*}
    which completes the proof of the proposition.
\end{proof}

\section{Proof of Theorem \ref{almostthm}}\label{mainsect}

In this section we combine the results of Section~\ref{prelimsect} to prove Theorem~\ref{almostthm}. Here, the overarching approach is to use the computation in Theorem~\ref{compthm} for small $n$, and Lemma~\ref{rklemma2} for large $n$. From here onwards, the set $\mcA$ is as defined in \eqref{Adef}, $X$ is as in \eqref{Xdef}, and in the notation of Section~\ref{prelimsect}, we set $k=2$, $k_1=6$, and $k_2=2.16$.

We begin with finding explicit constants for the condition \ref{q0cond}. To do so, we first state a couple of simple lemmas from the literature. Note that in Lemma~\ref{rosserlemma2}, $\pi(x)=\sum_{p\leq x}1$ is the usual prime-counting function.

\begin{lemma}\label{glasbylemma}
    For all $a>1$,
    \begin{equation*}
        \sum_{p\geq a}\frac{1}{p^2}\le \frac{2.22}{a \log a}.
    \end{equation*}
\end{lemma}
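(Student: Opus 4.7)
The plan is a two-regime argument: partial summation against an explicit prime-counting bound for large $a$, and a direct verification for small $a$.

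For large $a$, I would apply Abel summation (treating $\pi$ as a right-continuous step function) to express
\begin{equation*}
  \sum_{p \geq a} \frac{1}{p^2} = -\frac{\pi(a^-)}{a^2} + 2\int_a^\infty \frac{\pi(t)}{t^3}\,dt \leq 2\int_a^\infty \frac{\pi(t)}{t^3}\,dt,
\end{equation*}
using $\pi(x)/x^2 \to 0$ to discard the boundary term at infinity. Inserting an explicit Chebyshev-type bound of the form $\pi(t) \leq 1.11\,t/\log t$, valid for $t \geq t_0$ for some moderate $t_0$ (available from the standard Rosser--Schoenfeld/Dusart toolkit), together with the elementary inequality
\begin{equation*}
  \int_a^\infty \frac{dt}{t^2\log t} = \frac{1}{a\log a} - \int_a^\infty \frac{dt}{t^2(\log t)^2} \leq \frac{1}{a\log a}
\end{equation*}
(one integration by parts), yields $\sum_{p \geq a} 1/p^2 \leq 2.22/(a \log a)$ for all $a \geq t_0$.

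For the residual range, I would separate two subcases. As $a \to 1^+$, the right-hand side $2.22/(a \log a) \to +\infty$, so the inequality is trivial in a neighbourhood of $1$ (the statement is meaningful only once $a$ is bounded away from $1$, since the left-hand side is bounded while the right-hand side is not sign-definite for $a<1$). For $a$ in the bounded interval between that neighbourhood and $t_0$, the tail $\sum_{p \geq a} 1/p^2$ is piecewise constant and strictly decreasing at each prime break-point, so only finitely many thresholds need to be checked. At each such threshold, the tail is computed by subtracting the contribution of primes less than $a$ from a high-precision numerical value of the prime zeta function $P(2) = \sum_p 1/p^2 \approx 0.4522$, and the inequality is verified directly.

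The main obstacle is calibrating the constant $2.22$: the leading constant $C_0$ in the explicit bound $\pi(t) \leq C_0\,t/\log t$ must satisfy $2C_0 \leq 2.22$, i.e.\ $C_0 \leq 1.11$, and the threshold $t_0$ from which this holds must be small enough that the residual computational check is light. Since the sharpest explicit forms of the prime number theorem give $C_0$ arbitrarily close to $1$ for sufficiently large $t$, there is adequate room, and the proof reduces to balancing these two inputs.
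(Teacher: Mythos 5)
Your proposal is correct in substance but takes a genuinely different route from the paper. The paper's proof is essentially a citation: for $a\geq 12$ it invokes \cite[Lemma 7]{glasby2021most} directly, and it handles $0<a\leq 12$ by a short computation starting from $\sum_{p\geq 2}1/p^2=0.45224\ldots$. You instead rebuild the estimate from scratch: Abel summation against an explicit Chebyshev-type bound $\pi(t)\leq 1.11\,t/\log t$, the integration-by-parts inequality $\int_a^\infty \frac{dt}{t^2\log t}\leq \frac{1}{a\log a}$, and a finite check below the threshold $t_0$. Your calibration is sound: discarding the boundary term $-\pi(a^-)/a^2$ costs a factor of about $2$ asymptotically, and $2\times 1.11=2.22$ exactly, so the constant closes; using the paper's own Lemma~\ref{rosserlemma2}, $\pi(x)<\frac{x}{\log x}\bigl(1+\frac{3}{2\log x}\bigr)$, one may take $t_0\approx 8.5\cdot 10^5$ (or $t_0\approx 1.1\cdot 10^5$ with Dusart-type constants), and your observation that on each interval between consecutive primes the left side is constant while the right side decreases correctly reduces the residual range to finitely many prime thresholds, a light computation. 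The trade-off is that the paper's proof is two lines at the cost of an external reference and only needs a check up to $12$, while yours is self-contained but pushes the finite verification up to $t_0$ and requires rigorous control of the partial sums of $P(2)$. One point you flag parenthetically should be made explicit: as stated, ``for all $a>0$'' is literally false for $0<a<1$ (there the right-hand side is negative while the left-hand side equals $P(2)>0$, and at $a=1$ the bound is undefined), so both your argument and the paper's in fact establish the inequality only for $a>1$; this is harmless, since the lemma is applied with $a=z\geq 10^8$, but your proposal handles the issue at least as honestly as the paper does.
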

\begin{proof}
    For $a\geq 12$, the lemma follows directly from \cite[Lemma 7]{glasby2021most}. The case ${1<a\leq 12}$ can then be verified via a short computation, noting that
    \begin{equation*}
        \sum_{p\geq 2}\frac{1}{p^2}=0.45224\ldots\,.\qedhere
    \end{equation*}
\end{proof}

\begin{lemma}[{\cite[Theorem 1]{rosser1962approximate}}]\label{rosserlemma2}
    For $x>1$, 
    \begin{equation*}
        \pi(x)<\frac{x}{\log x}\left(1+\frac{3}{2\log x}\right).
    \end{equation*}
\end{lemma}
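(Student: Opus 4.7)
The plan is to prove the explicit upper bound $\pi(x) < \frac{x}{\log x}\bigl(1 + \frac{3}{2\log x}\bigr)$ for all $x > 1$, a classical estimate of Rosser and Schoenfeld. The strategy splits into a small-$x$ computational verification and a large-$x$ analytic argument that starts from explicit bounds on the Chebyshev theta function $\theta(x) = \sum_{p \le x} \log p$, then converts to $\pi$ by partial summation.

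The first step is to pin down an explicit upper bound of the form $\theta(x) \le x(1 + \delta(x))$ for suitable small $\delta(x)$. A standard route is to use the central binomial coefficient inequality $\prod_{n < p \le 2n} p \le \binom{2n}{n} \le 4^n$, together with a Selberg-style identity applied to $\theta(x) + \theta(x/2) + \cdots$, to obtain a crude Chebyshev-type bound $\theta(x) < cx$ for some explicit $c$ slightly larger than $\log 2$. This crude bound is then sharpened by iterating the binomial inequality and exploiting explicit partial sums such as $\sum_{p \le x} \frac{\log p}{p} = \log x + E(x)$ with explicit control of $E(x)$ (of Mertens--Chebyshev type). A complementary sharpening uses a truncated explicit formula for $\psi(x)$ together with an explicit zero-free region and an explicit zero-density estimate; this is in fact the route Rosser--Schoenfeld use in their 1962 paper to drive $\delta(x) \to 0$ quantitatively.

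The second step is the passage from $\theta$ to $\pi$ via Abel summation. Writing
\begin{equation*}
    \pi(x) = \frac{\theta(x)}{\log x} + \int_2^x \frac{\theta(t)}{t (\log t)^2}\, dt + O\!\left(\frac{\sqrt{x}}{\log x}\right),
\end{equation*}
where the $O$-term collects contributions from prime powers $p^k$, $k \ge 2$, one inserts the upper bound for $\theta$. The main term contributes $\frac{x}{\log x}(1 + \delta(x))$, while the integral contributes essentially $\frac{x}{(\log x)^2}$ after the standard expansion $\int_2^x dt/(\log t)^2 = \frac{x}{(\log x)^2} + O(x/(\log x)^3)$, which is obtained by integration by parts. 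Combining these and tracking the leading constants produces a bound of the shape $\pi(x) < \frac{x}{\log x} + \frac{Cx}{(\log x)^2}$, and the goal is to show one may take $C \le 3/2$ for every $x > 1$.

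The main obstacle is squeezing the constant down to exactly $3/2$ uniformly in $x > 1$. For large $x$ this is easy because the analytic bounds give $C$ tending to $1$; for moderate $x$ one needs to absorb error terms from the zero-free region estimate and the error in $\theta$. The practical resolution, and where most of the work concentrates, is a threshold argument: establish the asymptotic inequality with $C < 3/2$ for $x \ge x_0$ using the explicit $\theta$ bound and the partial summation above, then verify the stated inequality directly for $1 < x \le x_0$ by evaluating $\pi(x)$ at each prime (noting that $\pi$ is a step function, it suffices to check the inequality immediately before each prime jump, where $x/\log x$ is smallest relative to $\pi(x)$). Choosing $x_0$ small enough that a table of primes up to $x_0$ is feasible, yet large enough that the analytic bound yields $C \le 3/2$, is the delicate balancing act.
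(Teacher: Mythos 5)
The paper does not actually prove this lemma: it is quoted directly as Theorem~1 of Rosser and Schoenfeld \cite{rosser1962approximate}, so there is no internal argument to compare against. Your sketch is essentially a roadmap of how that classical result is established in the literature --- explicit bounds for the Chebyshev functions coming from the explicit formula with verified zero data, conversion to $\pi$ by partial summation, and a finite verification for small $x$ --- so you have reconstructed the provenance of the cited result rather than found a different route. In that sense the outline is sound, but it is an outline of the cited proof, not a self-contained proof: the genuinely hard ingredient, an explicit inequality of the form $\theta(x)<(1+\delta(x))x$ with $\delta$ small enough uniformly down to moderate $x$, cannot be extracted from the elementary binomial-coefficient argument (which only yields constants near $\log 4$) and really does require the explicit-formula machinery you gesture at, i.e.\ precisely the content of Rosser--Schoenfeld's paper.

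Two technical slips are worth flagging. First, the relation $\pi(x)=\theta(x)/\log x+\int_2^x \theta(t)/(t\log^2 t)\,\mathrm{d}t$ is an exact identity; prime powers do not enter at all in passing from $\theta$ to $\pi$. The $O(\sqrt{x})$ correction you mention belongs to the comparison of $\psi$ with $\theta$, so writing it into the $\theta$-to-$\pi$ identity is incorrect as stated (though harmless for an upper bound, since the spurious term is nonnegative). Second, in the finite verification the reduction to finitely many checks should be performed at each prime $p$, i.e.\ just after $\pi$ jumps, since on each interval between consecutive primes $\pi$ is constant and (for $x$ beyond a small threshold) the right-hand side is increasing, so the critical point is the left endpoint; checking ``immediately before each prime jump'' tests the easy endpoint of the interval, not the worst case. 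Neither slip is fatal to the strategy, but both would need correcting in a written-out argument.
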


Our explicit version of \eqref{q0cond} is then as follows. In particular, for $N>10^{40}$ we may take $c=0.2$ and $\delta=\frac{1}{4}$. This result could certainly be optimised further, however any improvement would have a negligible impact in our subsequent proof of Theorem~\ref{almostthm}.

\begin{lemma}\label{explicitq0}
    Let $N\geq 10^{40}$, $z=X^{1/6}$ and $y=X^{1/2.16}$. Then,
    \begin{equation*}
        \sum_{\substack{z\le q<y\\q\in \mathcal{P}}}|\mcA_{q^2}|\le 0.2|\mcA|^{3/4}.
    \end{equation*}
\end{lemma}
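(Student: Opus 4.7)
The plan is to exploit the fact that $\mcA$ is a set of consecutive integers lying in an interval of length $3N^{2/3}$. This gives the pointwise bound $|\mcA_{q^2}| \leq 3N^{2/3}/q^2 + 1$ for every prime $q$, and summing over $z \leq q < y$ separates the total into two qualitatively different contributions:
\begin{equation*}
    \sum_{\substack{z \leq q < y \\ q \in \PP}} |\mcA_{q^2}| \leq 3N^{2/3} \sum_{q \geq z} \frac{1}{q^2} + \pi(y).
\end{equation*}

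For the ``divisibility'' term, Lemma~\ref{glasbylemma} applied with $a = z$ gives $\sum_{q \geq z} 1/q^2 \leq 2.22/(z \log z)$. Using $X \geq N$ and $z = X^{1/5}$, one immediately obtains $z \log z \geq \tfrac{1}{5} N^{1/5} \log N$, so this term is at most a constant multiple of $N^{7/15}/\log N$. For the ``counting'' term, the estimate $y \leq X^{1/1.85} \leq (N + 3N^{2/3})^{1/1.85}$ together with $N \geq 10^{40}$ makes $y$ essentially indistinguishable from $N^{1/1.85}$, and Lemma~\ref{rosserlemma2} then yields $\pi(y) \leq (1.85 + o(1)) N^{1/1.85}/\log N$ with a fully explicit, negligible $o(1)$ in this range.

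The final step is to compare these two explicit estimates with $0.03\,|\mcA|^{0.82} \geq 0.03\,(3N^{2/3} - 1)^{0.82}$. The key observation is that $(2/3)\cdot 0.82 = 0.5467$ strictly exceeds both $7/15 = 0.4667$ and $1/1.85 = 0.5405$, so each term on the left grows with a smaller power of $N$ than the right. Writing the ratio LHS/RHS as a sum of terms of the form $C_i N^{-\alpha_i}/\log N$ with $\alpha_i > 0$ shows it is monotonically decreasing in $N$, so it suffices to verify the inequality at the single point $N = 10^{40}$. A direct numerical evaluation there gives a left-hand side of order $9 \cdot 10^{19}$ against a right-hand side of order $5 \cdot 10^{20}$, well within the required bound.

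The main obstacle is that the exponent margin on the $\pi(y)$ contribution is thin: $0.5467 - 0.5405 = 0.0062$ gives only a factor of $N^{0.0062} \approx 1.77$ at $N = 10^{40}$, so constants must be tracked carefully. Fortunately, the $1/\log N$ factor present in Lemma~\ref{rosserlemma2} but absent from the target $|\mcA|^{0.82}$ supplies the decisive saving, and the constants $c = 0.03$ and $\delta = 0.18$ are chosen loosely for convenience rather than sharpness, consistent with the paper's remark that any improvement would have negligible downstream impact.
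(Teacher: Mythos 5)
Your proposal is correct and follows essentially the same route as the paper: the pointwise bound $|\mcA_{q^2}|\le |\mcA|/q^2+1$, the split into a $\sum_{q\ge z}1/q^2$ term handled by Lemma~\ref{glasbylemma} and a $\pi(y)$ term handled by Lemma~\ref{rosserlemma2}, followed by an explicit numerical comparison. The only cosmetic difference is that you normalise everything in terms of $N$ and check the single point $N=10^{40}$ via monotonicity, whereas the paper compares each term directly against powers of $|\mcA|$ (obtaining $0.17|\mcA|^{7/10}+0.021|\mcA|^{0.82}$); both yield the stated bound.
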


\begin{proof}
    Since $\mcA$ is a set of consecutive integers,
    \begin{align}\label{Aq2eq1}
        \sum_{\substack{z\le q<y\\q\in \mathcal{P}}}|\mcA_{q^2}|&\le \sum_{\substack{z\le q<y\\q\in \mathcal{P}}}\left(\frac{|\mcA|}{q^2}+1\right)\notag\\
        &=|\mcA|\sum_{\substack{z\le q<y\\q\in \mathcal{P}}}\frac{1}{q^2}+\sum_{\substack{z\le q<y\\q\in \mathcal{P}}}1\notag\\
        &\le \frac{2.22|\mcA|}{z \log z}+\pi(y),
    \end{align}
    where in the last line we have applied Lemma~\ref{glasbylemma}. Now, since $N\geq 10^{40}$, we have
    \begin{equation*}
        z= X^{1/6}\ge N^{1/6}\ge 4.64\cdot 10^6
    \end{equation*}
    but also
    \begin{align*}
        z\geq N^{1/6}=\frac{1}{3^{1/4}}(3N^{2/3})^{1/4}\geq\frac{1}{3^{1/4}}|\mcA|^{1/4}.
    \end{align*}
    Thus, 
    \begin{equation}\label{zbound}
        \frac{2.22|\mcA|}{z\log z}\leq 2.22\cdot3^{1/4}\frac{|\mcA|^{3/4}}{\log(4.64\cdot 10^6)}\leq 0.191|\mcA|^{3/4}.
    \end{equation}
    We now bound $\pi(y)$. Here,
    \begin{equation*}
        y=X^{1/2.16}\ge N^{1/2.16}\geq 3.3\cdot 10^{18}
    \end{equation*}
    and
    \begin{equation*}
        y\leq (N+3N^{2/3})^{1/2.16}\leq 1.01 N^{1/2.16}\leq |\mcA|^{0.69}
    \end{equation*}
    so that Lemma~\ref{rosserlemma2} gives
    \begin{equation}\label{ybound}
        \pi(y)\leq 1.04\frac{y}{\log y}\leq 0.025y\leq 0.025|\mcA|^{0.69}.
    \end{equation}
    Substituting \eqref{zbound} and \eqref{ybound} into \eqref{Aq2eq1} yields
    \begin{equation*}
        \sum_{\substack{z\le q<y\\q\in \mathcal{P}}}|\mcA_{q^2}|\leq 0.191|\mcA|^{3/4}+0.025|\mcA|^{0.69}\leq 0.2|\mcA|^{3/4}
    \end{equation*}
    as required.
\end{proof}

We are now ready to prove Theorem~\ref{almostthm}. To neaten our calculations we only use the computational result in Theorem~\ref{compthm} for $n^3\leq 10^{40}$. In particular, the full strength of Theorem~\ref{compthm} is not required to prove Theorem~\ref{almostthm}.

\begin{proof}[Proof of Theorem~\ref{almostthm}]
    Let $N=n^3\geq 10^{40}$ throughout since smaller values of $N$ are covered by Theorem~\ref{compthm}. We apply Lemma~\ref{explicitq0} and Lemma~\ref{rklemma2} with $k=2$, $z=X^{1/6}$, ${y=X^{1/2.16}}$, and
    \begin{equation*}
        \lambda=k+1-k_2=0.84
    \end{equation*}
    to obtain
    \begin{equation}\label{r2lb1}
        r_2(\mcA)\ge \frac{0.84}{2}S(\mcA, \PP, z)-\frac{1}{2}\sum_{z\le q\le y}\left(1-\frac{\log q}{\log y}\right)S(\mcA_q, \PP, z)-0.084|\mcA|^{3/4}.
    \end{equation}
    We now bound each term in \eqref{r2lb1} separately. To begin with, since $N\geq 10^{40}$, we have $|\mcA|\leq 3N^{2/3}\leq 1.4\cdot 10^{27}$ and thus
    \begin{equation}\label{finalrem}
        0.084|\mcA|^{3/4}\leq  10^{-5}\thinspace\frac{ N^{2/3}}{\log X}.
    \end{equation}
    Next we give a lower bound for the first term in \eqref{r2lb1}. Since $z=X^{1/6}>10^5$, applying Lemma~\ref{linearsieve} with Lemmas~\ref{rosserlem} and~\ref{PrimeProductBound} gives
    \begin{align}\label{sapzlb1}
        S(\mcA, \PP, z)&>|\mcA|V(z)\cdot(f(s)- \varepsilon C_2(\varepsilon)e^2h(s))-\sum_{d<2D}\mu^2(d).\notag\\
        &\geq\frac{e^{-\gamma}(3N^{2/3}-2)}{\log z}\left(1-\frac{1}{2(\log z)^2}\right)(f(s)-110\thinspace\varepsilon e^2h(s))-\sum_{d<2D}\mu^2(d),
    \end{align}
    where $\varepsilon=2.8\cdot 10^{-4}$, $C_2(\varepsilon)=110$ comes from~\cite[Table~1]{BJV24}, and $s=\log D/\log z$ is to be chosen later.
    
    From here, we assume that $s\in[3,4]$ so that by \eqref{hsdef} and Lemma~\ref{fFlem},
    \begin{equation}\label{Csdef}
        f(s)-110\varepsilon e^2 h(s)>C(s):=\frac{2e^{\gamma}\log(s-1)-0.093e^{2-s}}{s}.
    \end{equation}
    To bound the remainder term in \eqref{sapzlb1}, we apply the bound (see \cite[Lemma~4.6]{ramare2013explicit})
    \begin{equation*}
        \sum_{d\leq x}\mu^2(d)\leq \frac{6}{\pi^2}x+0.5\sqrt{x},\qquad x\geq 10,
    \end{equation*}
    which gives
    \begin{equation}\label{sk1eq}
        \sum_{d\leq 2D}\mu^2(d)\leq 1.22D=1.22X^{s/k_1},
    \end{equation}
    noting that $D=z^s\geq z^3\geq 10^{20}$. Substituting \eqref{Csdef} and \eqref{sk1eq} into \eqref{sapzlb1},
    \begin{equation}\label{sapzlb2}
        S(\mcA,\PP,z)>\frac{e^{-\gamma}(3N^{2/3}-2)}{\log z}\left(1-\frac{1}{2(\log z)^2}\right)C(s)-1.22X^{s/k_1}.
    \end{equation}
    Since $k_1=6$, we find that the lower bound in \eqref{sapzlb2} is maximised around $s=3.54$ whereby $C(s)>0.93$ and 
    \begin{equation}\label{finalsapzlb}
        \frac{0.84}{2}S(\mcA,\PP,z)>3.9\frac{N^{2/3}}{\log X}.
    \end{equation}
    Finally we bound the second term in \eqref{r2lb1}. In the notation of Proposition~\ref{newprop35-2} we set $\alpha=0.06$ so that
    \begin{equation}
        D_y=X^{\frac{2}{3}-0.06-\frac{1}{2.16}}>5\cdot 10^5
    \end{equation}
    and it is valid, via Lemma~\ref{PrimeProductBound}, to set $Q=2$ and $\varepsilon=2.8\cdot 10^{-4}$. In particular, Proposition~\ref{newprop35-2} gives
    \begin{equation*}
        \frac{1}{2}\sum_{\substack{q\in \PP\\ z\le q<y}}\left(1-\frac{\log q}{\log y}\right)S(\mcA_q, \PP, z)\le \frac{6 e^{-\gamma}}{2}\left(1+\frac{1}{2(\log D_y)^2}\right)(M_1(X)+M_2(X))+\frac{\mathcal{E}(X)}{2}.
    \end{equation*}
    Here,
    \begin{align*}
        &3e^{-\gamma}\left(1+\frac{1}{(\log D_y)^2}\right)\le 1.7,\\
        &M_1(X)\le \frac{2.04N^{2/3}}{\log X},\\
        &M_2(X)\le (1.9\cdot10^{-8})\frac{N^{2/3}}{\log X}
    \end{align*}
    and
    \begin{equation*}
        \frac{\mathcal{E}(X)}{2}\le 0.15 \frac{N^{2/3}}{\log X}
    \end{equation*}
    computed as per the definitions \eqref{M1def}--\eqref{calEdef} of $M_1(X)$, $M_2(X)$ and $\mathcal{E}(X)$. Combining these bounds,
    \begin{equation}\label{Saqfinalbound}
        \frac{1}{2}\sum_{\substack{q\in \PP\\ z\le q<y}}\left(1-\frac{\log q}{\log y}\right)S(\mcA_q, \PP, z)\leq 3.62\frac{N^{2/3}}{\log X}.
    \end{equation}
    Substituting \eqref{finalrem}, \eqref{finalsapzlb} and \eqref{Saqfinalbound} into \eqref{r2lb1} then yields
    \begin{equation*}
        r_2(\mcA)>0.27\frac{N^{2/3}}{\log X},
    \end{equation*}
    which is positive, as desired.
\end{proof}

\printbibliography

\end{document}